\documentclass[11pt]{amsart}
\usepackage{amsmath}
\usepackage{amsfonts}
\usepackage{amsthm}
\usepackage{comment}
\usepackage{url}
\usepackage{color}
\usepackage{enumitem}
\setlist[enumerate]{font=\normalfont}
\usepackage{mathtools}
\usepackage[letterpaper,hmargin=1.2in]{geometry}
\usepackage{latexsym}
\usepackage{amssymb}
\usepackage{graphicx}        
\usepackage{float}
\usepackage{tikz-cd}
\usepackage[colorinlistoftodos]{todonotes}
\usepackage[numbers,square]{natbib}
\usepackage[colorlinks=true, allcolors=blue,backref=page,citecolor=blue]{hyperref}
\usepackage{subcaption}
\usepackage{eucal}
\setcitestyle{numbers,open={[},close={]}}

\theoremstyle{definition}
\newtheorem{theorem}{Theorem}[section]
\newtheorem{definition}[theorem]{Definition}

\newtheorem{lemma}[theorem]{Lemma}
\newtheorem{proposition}[theorem]{Proposition}
\newtheorem{corollary}[theorem]{Corollary}

\newtheorem*{theorem*}{Theorem}

\theoremstyle{remark}
\newtheorem{remark}[theorem]{Remark}
\newtheorem{example}[theorem]{Example}

\usepackage{wasysym}

\newcommand{\GF}{\operatorname{GF}}
\newcommand{\Berg}{\operatorname{Berg}}

\renewcommand{\phi}{\varphi}

\newcommand{\del}{\partial}

\title{\textbf{Toric vector bundles with trivial Chern class and flag decorations}}
\author{Sergio Cristancho}
\address{Princeton University}
\email{sergio.cris@princeton.edu}

\begin{document}

\begin{abstract}
	We study toric vector bundles on complete toric varieties whose total equivariant Chern classes are trivial. Our approach is tropical, using the notion of tropical toric vector bundles as piecewise linear maps introduced by Kaveh and Manon. We prove that any toric vector bundle of rank $r$ with trivial Chern class and affinely independent equivariant Chern roots is equivariantly isomorphic to a toric vector bundle pulled back from one of a finite set of varieties with dimension at most $r-1$ after twisting by a character. This extends a theorem of Payne about toric vector bundles of rank $r\leq 3$ with trivial Chern class. As an application, we construct examples of complete toric varieties of dimension $n$ that admit no nontrivial toric vector bundles of rank $r\leq n+1$ with the aforementioned properties. We also introduce combinatorial gadgets we call flag decorations of permutohedra, whose convexity properties are key for our results. 
\end{abstract}
\maketitle
 
\section{Introduction}

Torus equivariant vector bundles on a toric variety, \emph{toric vector bundles}, have various combinatorial descriptions. The theory of toric line bundles, for instance, is very well understood in terms of support functions and polyhedra. Determining whether a complete toric variety $X$ admits a nontrivial toric line bundle reduces to a linear algebra problem. For toric vector bundles of higher rank, the seminal works of Kaneyama~\cite{kaneyama_equivariant_1975} and Klyachko~\cite{klyachko_equivariant_1990} characterized them in terms of linear algebraic data compatible with the fan structure. They have also been described in more convex geometric fashion by Di Rocco, Jabbusch and Smith~\cite{rocco_toric_2018} using parliaments of polytopes, and by Kaveh and Manon~\cite{kaveh_toric_2023} in terms of piecewise linear valuations or piecewise linear maps. Despite these advances, the analogous question 
\begin{quote}
	\centering
	\emph{Given a complete toric variety $X$, does it admit a nontrivial toric vector bundle?} 
\end{quote}
remains difficult to answer in general. We say a toric vector bundle is \emph{trivial} if it is equivariantly isomorphic to a $T$-linearized free vector bundle. 

\subsection{Toric vector bundles with trivial Chern class} In~\cite{payne_toric_2008}, Payne studies the previous question by developing the theory of branched covers of fans. Using this point of view, he exhibits toric threefolds that do not admit any nontrivial toric vector bundles of rank $r\leq 3$. His analysis requires special treatment of toric vector bundles whose total equivariant Chern class is globally polynomial, with \emph{trivial Chern class}. He proves that a toric vector bundle of rank $r\leq 3$ with trivial Chern class on a complete toric variety $X$ is either trivial or equivariantly isomorphic to a bundle pulled back from $\mathbb{P}^1$ or one of a finite list of projective toric surfaces after twisting by a character. Our first observation is that Payne's methods can be interpreted using the language of piecewise linear functions of Kaveh and Manon~\cite{kaveh_toric_2023} and furthermore depend only on the \emph{tropicalizations} of the bundles. 

Recently, the notion of \emph{tropical toric vector bundles} (tropical bundles for short) was introduced independently by Khan and Maclagan~\cite{khan_tropical_2024} and by Kaveh and Manon~\cite{kaveh_tropical_2024}. Both approaches replace the linear algebraic data by \emph{matroid} data that is purely combinatorial. In this paper we take the point of view of Kaveh and Manon~\cite{kaveh_tropical_2024}, whereby a tropical bundle $\mathfrak{E}$ of rank $r$ on a fan $\Delta \subset N_\mathbb{R}$ is presented as a \emph{piecewise linear map} $\Phi:|\Delta|\to \Berg(\mathrm{M})$ from the support of $\Delta$ to the \emph{Bergman fan} $\Berg(\mathrm{M})$ of a matroid $\mathrm{M}$ of rank $r$. When $\mathfrak{E}$ has trivial Chern class, it posseses a well-defined multiset of global equivariant Chern roots.

Armed with this tropical perspective, we set out to test this strategy for bundles of higher rank. Our first finding is the following:

\begin{theorem}[Tropical bundle version]\label{thm:trivial_chern}
Let $\mathfrak{E}$ be a tropical bundle of rank $r$ on a complete fan $\Delta\subset N_\mathbb{R}$ with piecewise linear map $\Phi:|\Delta|\to \Berg(\mathrm{M})$. Suppose that $\mathfrak{E}$ has trivial Chern class and affinely independent Chern roots ${\boldsymbol{u}}=\lbrace u_1,\dots,u_r \rbrace$. Let $U:|\Delta| \to \mathbb{R}^r/\mathbb{R}\boldsymbol{1}$ be the linear map given by $U(x)=(\langle u_1, x \rangle,\dots,\langle u_r, x \rangle)$. Then there exist the following:
\begin{enumerate}
	\item a complete fan $\Xi$ that coarsens the $(r-1)$-dimensional permutohedral fan $\Sigma_r$,
	\item a tropical bundle $\mathfrak{F}$ of rank $r$ on $\Xi$ with PL map $\Psi: |\Xi|\to \Berg(\mathrm{M})$ and 
	\item a character $u\in M$ 
\end{enumerate}
such that ${U}: \Delta \to \Xi$ is a surjective morphism of fans and the following diagram commutes.
\[
\begin{tikzcd}
	\lvert\Delta\rvert \arrow[r,"{U}"] \arrow[rd,"\Phi\otimes u"'] & \lvert \Xi \rvert \arrow[d,"\Psi"]\\
	{} & \Berg(\mathrm{M})
\end{tikzcd}
\] 
In other words, $\mathfrak{E}\otimes \mathfrak{O}(u)$ is isomorphic to ${U}^* \mathfrak{F}$, where $\mathfrak{O}(u)$ is the tropical line bundle determined by $u:|\Delta|\to \mathbb{R}$. 
\end{theorem}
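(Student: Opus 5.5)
Throughout I write $\mathbb{R}^E$ for the ambient space of the matroid $\mathrm{M}$ on ground set $E$, so that $\Berg(\mathrm{M})\subset \mathbb{R}^E/\mathbb{R}\boldsymbol{1}$. The plan is to describe $\Phi$ cone by cone, in the spirit of Klyachko and Kaveh--Manon, and to show that every datum it depends on factors through $U$. On each maximal cone $\sigma\in\Delta$ the PL map $\Phi$ is linear. Its restriction is determined by a choice of basis $B_\sigma$ of $\mathrm{M}$ (an adapted frame) together with linear forms $u_{\sigma,b}\in M$ for $b\in B_\sigma$, the local equivariant Chern roots. Concretely,
\[
\Phi(x)_e=\min_{b\in C_\sigma(e)}\langle u_{\sigma,b},x\rangle
\]
for $x\in\sigma$, or the max, depending on convention. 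Here $C_\sigma(e)$ is the fundamental circuit data expressing $e$ in the basis $B_\sigma$.

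Trivial Chern class means that the multiset $\{u_{\sigma,b}\}_{b\in B_\sigma}$ is independent of $\sigma$ and equals $\boldsymbol{u}$. So on each $\sigma$ there is a bijection $\beta_\sigma\colon\{1,\dots,r\}\to B_\sigma$ with $u_{\sigma,\beta_\sigma(i)}=u_i$.

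First I twist by a character to normalize. Affine independence of $u_1,\dots,u_r$ is exactly the condition that the differences $u_i-u_1$ are linearly independent. Twisting by $u=-u_1$ shifts all roots by a common character, and the formula above is equivariant under adding a common linear function. Hence $\Phi\otimes u$ is a function of the vector $(\langle u_i,x\rangle)_i$ modulo $\mathbb{R}\boldsymbol{1}$, that is, of $U(x)$, on each cone. Because of the independence, $U\colon N_\mathbb{R}\to\mathbb{R}^r/\mathbb{R}\boldsymbol{1}$ is surjective onto an $(r-1)$-dimensional space. I will also need $\sigma\mapsto(\beta_\sigma,B_\sigma)$ to be compatible with this: on each cone, $\Phi\otimes u=\Psi_\sigma\circ U$, where $\Psi_\sigma(y)_e=\min_{i\in\beta_\sigma^{-1}(C_\sigma(e))} y_i$ is a PL map on $\mathbb{R}^r/\mathbb{R}\boldsymbol{1}$ that is linear on the cones of the permutohedral fan $\Sigma_r$.

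Next I would glue the local maps $\Psi_\sigma$. Define $\Psi$ on $U(\sigma)$ by $\Psi_\sigma$. Well-definedness is automatic because $\Phi$ is a function: if $U(x)=U(x')$ with $x\in\sigma$ and $x'\in\sigma'$, we need $\Psi_\sigma(U(x))=\Psi_{\sigma'}(U(x'))$. This is not immediate, since the fibres of $U$ are not contained in a single cone. I would argue that $\Psi_\sigma$, viewed as a function on all of $\mathbb{R}^r/\mathbb{R}\boldsymbol{1}$, is the restriction of one global piecewise linear map. The global map should be determined by the flag, or "flag decoration", structure: for $y$ in a permutohedral chamber given by an ordering of coordinates, $\Psi(y)$ depends on the flag of flats spanned by $\beta(1),\beta(2),\dots$ in that order. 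I would then show that the chamber-to-flag assignment coming from different $\sigma$ agree, using continuity of $\Phi$ across walls of $\Delta$. Across each wall the flags must agree on the relevant shared ordering.

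The main obstacle is this gluing and convexity step. The images $U(\sigma)$ cover $\mathbb{R}^r/\mathbb{R}\boldsymbol{1}$, since $\Delta$ is complete and $U$ is surjective. However, they can overlap, and we must show that one consistent flag decoration exists on a fan $\Xi$ coarsening $\Sigma_r$. I would take $\Xi$ to be the coarsest fan on whose cones $\Psi$ is linear. This coarsens $\Sigma_r$ because each $\Psi_\sigma$ is linear on permutohedral cones. I would then verify that each $U(\sigma)$ lies in a cone of $\Xi$, so that $U$ is a morphism of fans, and surjectivity follows from completeness. The key convexity lemma to prove is that the set of points of $\mathbb{R}^r/\mathbb{R}\boldsymbol{1}$ carrying a given flag datum is convex, so that a union of permutohedral chambers with the same flag is a cone. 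Finally, $\Psi$ being a PL map into $\Berg(\mathrm{M})$ on $\Xi$ defines the tropical bundle $\mathfrak{F}$, and the commutative diagram gives $\mathfrak{E}\otimes\mathfrak{O}(u)\cong U^*\mathfrak{F}$.
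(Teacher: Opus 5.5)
Your normalization (twisting so that $\Phi\otimes u$ depends only on $U(x)$) matches the paper's set-up. So does your observation that, by continuity of $\Phi$, the flag of flats of $\Phi(x)$ depends only on the permutohedral cell containing $U(x)$ (Lemma~\ref{lem:flag_deco}); this is what makes $\Psi$ well defined. A small correction: with the paper's lifted Bergman fan in $\mathbb{R}^{\mathrm{E}}$, you should lift through a section such as $s(y)=(y_1-y_r,\dots,y_{r-1}-y_r,0)$ rather than work modulo $\mathbb{R}\boldsymbol{1}$ in the target, where the twist would be invisible.

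The genuine gap is the step you yourself call the main obstacle. It is the actual content of the theorem, and you leave it unproved. As formulated it is also mis-specified in two ways.

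First, $\Phi$ is not linear on cones of $\Delta$ in ambient coordinates; your own formula has a $\min$ over fundamental circuits. So ``the coarsest fan on whose cones $\Psi$ is linear'' has to mean ``on each of whose maximal cones $\Psi=\phi_B\circ s$ for a single basis $B$''. With literal linearity, $U(\sigma)$ need not lie in one cone: take a trivial bundle with $\Phi_e=\min(\langle u_1,x\rangle,\langle u_2,x\rangle)$ for some $e$, on a $\Delta$ whose cones cross $\langle u_1-u_2,x\rangle=0$. In either reading, the existence of such a coarsest fan is precisely what must be shown.

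Second, ``a union of chambers with the same flag'' is the wrong merging rule. For $r=2$ the two chambers carry the same flag $F(1)=F(2)$ exactly when the bundle is nontrivial, and then they must \emph{not} be merged. The correct rule merges two adjacent chambers when their flags are simultaneously adapted to one ordered basis.

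With this rule, two statements are needed, and neither follows from continuity or piecewise linearity alone.
(i) The resulting edge set $\mathcal{E}(F)$ of $\Pi_r$ satisfies the square and hexagon properties. Hence it is a convex rank test and defines a coarsening $\Xi$ of $\Sigma_r$ (Theorem~\ref{thm:permuto}). The paper proves this face by face with matroid arguments. For example, on a hexagon it shows $F(2)\vee F(3)=F(23)$ and $F(2)\not\subset F(13)$, then builds a new independent set $(b_1,b_2,b_3)$ adapted to the three opposite flags.
(ii) Pairwise adaptedness along edges must be upgraded to a single ordered basis adapted to every flag in a maximal cone of $\Xi$ (Proposition~\ref{prop:transitivity}). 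The paper does this using the partial order $\mathcal{U}$ of the cone and the monotonicity $\partial_iF(K)\subseteq\partial_iF(J)$. Without it, $\Psi$ is not a tropical bundle on $\Xi$.

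This is also where affine independence is really used. Example~\ref{ex:no_coarsening} gives a bundle with trivial Chern class whose decoration of an octagonal sweep polytope violates the polygon property. So any proof of convexity must exploit that the $2$-faces of $\Pi_r$ are squares and hexagons.

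Once (i) and (ii) hold, the morphism property you also defer is easy. Every wall of the sweep fan $\Sigma(\boldsymbol{u})$ meeting $\operatorname{relint}\sigma$ corresponds to an edge in $\mathcal{E}(F)$, since both adjacent flags are adapted to $B_\sigma$.
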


Recall that the $(r-1)$-dimensional \emph{permutohedral fan} $\Sigma_r \subset \mathbb{R}^r/\mathbb{R}\boldsymbol{1}$ is the fan cut out by all hyperplanes of the form $x_i-x_j=0$ for $i<j$. Equivalently, it is the normal fan of the $r-1$ dimensional \emph{permutohedron} $\Pi_r \subset \mathbb{R}^r$, which is the Minkowski sum of all segments of the form $[e_i,e_j]$ for $i<j$, where $e_i$ is the $i$-th standard basis vector in $\mathbb{R}^r$. 

Here we allow fans with nonzero lineality space, and we say that a fan is \emph{trivial} when it is comprised of a single linear space. The proof of Theorem~\ref{thm:trivial_chern} constructs $\Xi$ and $\mathfrak{F}$ from $\mathfrak{E}$ explicitly, which allows us to determine when $\mathfrak{E}$ is trivial.

\begin{corollary}\label{cor:trivial}
	The tropical bundle $\mathfrak{E}$ is trivial if and only if the fan $\Xi$ provided by the proof of Theorem~\ref{thm:trivial_chern} is trivial.
\end{corollary}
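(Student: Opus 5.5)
We have to prove both directions of the equivalence. The idea is that the construction of $\Xi$ and $\mathfrak{F}$ in the proof of Theorem~\ref{thm:trivial_chern} is canonical: $\Xi$ is the coarsest coarsening of $\Sigma_r$ on which the composite $\Phi\otimes u$ descends to a PL map $\Psi$ through $U$. Concretely, $\Xi$ is obtained by merging adjacent cones of $\Sigma_r$ whenever $\Psi$ is linear on their union. Here the cones in question are the cones of $\Sigma_r$ meeting the image $U(|\Delta|)$, which is all of $\mathbb{R}^r/\mathbb{R}\boldsymbol{1}$ because the $u_i$ are affinely independent. I will use this description throughout, together with the statement $\mathfrak{E}\otimes\mathfrak{O}(u)\cong U^*\mathfrak{F}$.

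\emph{($\Leftarrow$)} Suppose $\Xi$ is trivial, i.e.\ consists of the single cone $\mathbb{R}^r/\mathbb{R}\boldsymbol{1}$. Then $\Psi$ is linear on all of $\mathbb{R}^r/\mathbb{R}\boldsymbol{1}$, so $\Psi(y)=\sum_i y_i\,w_i$ for fixed vectors $w_i$, with the image lying in a single cone of $\Berg(\mathrm{M})$. It follows that $\Phi\otimes u=\Psi\circ U$ is a global linear map $|\Delta|\to\Berg(\mathrm{M})$ landing in one cone. A tropical bundle given by a globally linear PL map is, by definition, the trivial one: it is the sum of the tropical line bundles $\mathfrak{O}(v_i)$ attached to a basis adapted to that cone. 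Twisting back by $-u$ preserves triviality, so $\mathfrak{E}$ is trivial.

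\emph{($\Rightarrow$)} Suppose $\mathfrak{E}$ is trivial. Then $\Phi$ is globally linear into a single cone $\sigma$ of $\Berg(\mathrm{M})$, and the Chern roots are characters $v_1,\dots,v_r$ with $\Phi(x)=\sum_i\langle v_i,x\rangle e_{B_i}$ for a flag/basis indexing $\sigma$. These roots must coincide, as a multiset, with $\boldsymbol{u}$ up to the twist by $u$. In the construction, $\Psi$ on each cone of $\Sigma_r$ is determined by matching coordinates $y_i$ with the roots $u_i$. Because $\Phi\otimes u$ is linear, $\Psi$ is given by the same formula on every cone of $\Sigma_r$; this uses surjectivity of $U$, which holds by affine independence. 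Hence all maximal cones of $\Sigma_r$ get merged, and $\Xi$ is trivial.

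The main obstacle is the $(\Rightarrow)$ direction, specifically checking that the $\Psi$ produced by the construction is literally a single linear formula. A priori, on different cones of $\Sigma_r$ the construction might order the roots differently and assign different flags. I would handle this by showing that a globally linear $\Phi$ determines the flag decoration independently of the permutohedral chamber: the flag in $\mathrm{M}$ attached to a chamber is read off from the values $\Phi$ takes there, and these values are all in the one cone $\sigma$. Affine independence is essential for this comparison, since it makes $U$ surjective and so lets us compare $\Psi$ on every chamber.
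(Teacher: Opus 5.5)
\textbf{There is a genuine gap: you use ``linear'' where the paper means ``adapted to one basis''.}

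In this paper a tropical bundle is trivial when $\Phi=\phi_B\circ(u_1,\dots,u_r)$ for a single basis $B$. The map $\phi_B$ is only piecewise linear: for $e\notin B$ its $e$-coordinate is $\min\{w_j\mid j\in C_e(B)\setminus e\}$. So a trivial bundle is in general neither linear nor supported in one cone of $\Berg(\mathrm{M})$.

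In fact, a linear map from the complete $N_\mathbb{R}$ into a single cone $C(\mathcal{F})$ lands in its lineality space $\mathbb{R}\boldsymbol{1}$. That forces all Chern roots to coincide, which affine independence forbids for $r\geq 2$.

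A concrete example: take $N=\mathbb{Z}$, $\mathrm{M}=U_{2,3}$ on $\{a,b,c\}$, $B=(a,b)$, $u_1=1$, $u_2=0$. The trivial bundle $\Phi(x)=(x,0,\min(x,0))$ is not linear. It sends $x>0$ and $x<0$ into the different cones $C(\varnothing\subset \{a\}\subset \mathrm{E})$ and $C(\varnothing\subset \{b\}\subset \mathrm{E})$.

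Your abstract description of $\Xi$ (the coarsest coarsening on which $\Psi$ is a PL map in the Kaveh--Manon sense) is fine. But you then make it concrete as ``merge chambers on whose union $\Psi$ is linear'', and that is wrong. The paper's $\Xi$ is $\Xi(\mathcal{E}(F))$: an edge of $\Pi_r$ is kept when the flags at its endpoints adapt to a \emph{common ordered basis}.

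In the example both flags $\{a\}$ and $\{b\}$ adapt to $(a,b)$, so the paper's $\Xi$ is trivial, while your rule returns $\Sigma_2$. So under your own description of $\Xi$, the direction $(\Rightarrow)$ would actually fail. Your argument for it also rests on the false premise that $\Phi\otimes u$ is linear into one cone. The same confusion underlies two other points:
\begin{itemize}
\item your $(\Leftarrow)$ claim that $\Psi$ is linear on $\mathbb{R}^r/\mathbb{R}\boldsymbol{1}$ with image in one cone;
\item your identification of trivial bundles with $\mathfrak{O}(v_1)\oplus\dots\oplus\mathfrak{O}(v_r)$, which Section~\ref{ssec:trivial} explicitly cautions against.
\end{itemize}

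\textbf{The repair, which is the paper's proof.} Replace ``linear'' by ``adapted to one basis'' throughout.

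For $(\Rightarrow)$: if $\Phi=\phi_B\circ T$, then by \eqref{eqn:coordinatewise} every $F(\mathcal{W})$ has $F(\mathcal{W})^i=\mathrm{cl}_{\mathrm{M}}(b_j\mid j\in W^i)$. Hence every edge of $\Pi_r$ lies in $\mathcal{E}(F)$, and $\Xi$ is trivial.

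For $(\Leftarrow)$: if $\Xi$ is trivial, all edges lie in $\mathcal{E}(F)$. Pairwise compatibility along edges does not by itself give one basis for all $r!$ chambers. Proposition~\ref{prop:transitivity} supplies exactly this, applied to the single class $D$ of all vertices. Then \eqref{eqn:coordinatewise} gives $\Phi\otimes u=\phi_B\circ T$, so $\mathfrak{E}$ is trivial.

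Equivalently, $\mathfrak{F}$ is a tropical bundle on a one-cone fan, so by definition $\Psi=\phi_B\circ s$ for a single $B$. Then $\Phi\otimes u=\Psi\circ U=\phi_B\circ(s\circ U)$ is trivial.
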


The previous tropical statement implies its analogue for vector bundles, which extends Payne's results when the {Chern roots} of the toric vector bundle are affinely independent.

\begin{theorem}[Vector bundle version]\label{thm:analogous}
Let $\mathcal{E}$ be a toric vector bundle of rank $r$ on a complete toric variety $X_\Delta$ with fan $\Delta \subset N_\mathbb{R}$. Suppose that $\mathcal{E}$ has trivial Chern class and affinely independent Chern roots $\boldsymbol{u}=\lbrace u_1,\dots,u_r \rbrace$. Let $U:|\Delta| \to \mathbb{R}^r/\mathbb{R}\boldsymbol{1}$ be the linear map given by $U(x)=(\langle u_1, x \rangle,\dots,\langle u_r, x \rangle)$. Then there exist the following:
\begin{enumerate}
	\item a complete fan $\Xi$ that coarsens the $(r-1)$-dimensional permutohedral fan $\Sigma_r$,
	\item a toric vector bundle $\mathcal{F}$ of rank $r$ on $X_\Xi$,
	\item and a character $u\in M$
\end{enumerate}
such that $\pi_{{U}}:X_\Delta\to X_\Xi$ is a surjective toric morphism and $\mathcal{E}\otimes \mathcal{O}(\text{div}\chi^u)$ is equivariantly isomorphic to $\pi_{{U}}^* \mathcal{F}$. 
\end{theorem}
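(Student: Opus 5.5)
The plan is to deduce Theorem~\ref{thm:analogous} from Theorem~\ref{thm:trivial_chern} by passing from the toric vector bundle $\mathcal{E}$ to its Kaveh--Manon piecewise linear map, applying the tropical theorem, and then lifting the resulting factorization back to the level of bundles.

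First, we present $\mathcal{E}$ via its Klyachko data. This gives a finite-dimensional vector space $E$ of dimension $r$ together with decreasing filtrations $E^\rho(i)$ for the rays $\rho\in\Delta(1)$. These filtrations satisfy Klyachko's compatibility condition: on each maximal cone $\sigma$ there is a basis $L_\sigma$ of $E$ adapted to all filtrations indexed by rays of $\sigma$.

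Let $\mathcal{L}\subset E$ be the finite set of vectors obtained as the union of the $L_\sigma$. Let $\mathrm{M}$ be the matroid that $\mathcal{L}$ represents, realized by a linear space $L$. Following Kaveh--Manon, $\mathcal{E}$ is then encoded by a piecewise linear map $\Phi:|\Delta|\to \Berg(\mathrm{M})\subset \mathrm{Trop}(L)$. On each $\sigma$, this map is linear and sends $x$ to the vector of weights $(\langle u_{\sigma,\ell},x\rangle)_{\ell\in \mathcal{L}}$, where $u_{\sigma,\ell}$ are the characters of the adapted basis. The tropicalization $\mathfrak{E}$ has the same equivariant Chern class as $\mathcal{E}$, since the equivariant Chern roots on $\sigma$ are the $u_{\sigma,\ell}$ for $\ell\in L_\sigma$. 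Hence $\mathfrak{E}$ also has trivial Chern class, with the same affinely independent global roots $\boldsymbol{u}$.

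Next, we apply Theorem~\ref{thm:trivial_chern}. This produces the complete fan $\Xi$ coarsening $\Sigma_r$, the PL map $\Psi:|\Xi|\to\Berg(\mathrm{M})$, and the character $u$ with $\Psi\circ U=\Phi\otimes u$. Because $U:\Delta\to\Xi$ is a surjective morphism of fans, it induces the surjective toric morphism $\pi_U:X_\Delta\to X_\Xi$. This uses that $U$ is integral: it maps $N$ into the lattice $\mathbb{Z}^r/\mathbb{Z}\boldsymbol{1}$, since the $u_i\in M$.

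The main step is to lift $\Psi$ to an honest toric vector bundle $\mathcal{F}$ on $X_\Xi$. The key point is that $\Psi$ takes values in $\Berg(\mathrm{M})$ for the same realizable matroid $\mathrm{M}$, realized by the same $L$. By the Kaveh--Manon correspondence, a PL map into the tropicalization of $L$ defines a toric vector bundle precisely when, on each maximal cone, its image lies in a single apartment, meaning a cone corresponding to a basis of $E$ drawn from $\mathcal{L}$.

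I would verify this for $\Psi$ on each maximal cone $\tau$ of $\Xi$. Choose a maximal cone $\sigma\in\Delta$ with $U(\sigma)$ full-dimensional in $\tau$; surjectivity guarantees one exists. On $U(\sigma)$, $\Psi$ agrees with $\Phi\otimes u$, so it is the linear map into the apartment of $L_\sigma$, with weights shifted by $u$. Since $\Psi$ is linear on $\tau$, it must coincide on all of $\tau$ with the linear extension of that apartment map. This linear extension stays in the apartment of $L_\sigma$. Here I need that $\Psi$ being linear on $\tau$ forces the whole image into a single apartment; I expect this to follow from how $\Psi$ was constructed in the tropical proof, namely by taking adapted-basis data from cones of $\Delta$. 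This is the step where some care, or an appeal to the explicit construction of $\Psi$, is needed.

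Finally, we compare the bundles. Pulling back along a toric morphism corresponds to precomposing the PL map, and twisting by $\mathcal{O}(\mathrm{div}\chi^u)$ corresponds to adding $u$. Therefore $\pi_U^*\mathcal{F}$ and $\mathcal{E}\otimes\mathcal{O}(\mathrm{div}\chi^u)$ have the same PL map into $\Berg(\mathrm{M})$ with respect to the same realization $L$. Since Kaveh--Manon's PL maps into a fixed realized linear ideal determine the bundle up to equivariant isomorphism, the two bundles are equivariantly isomorphic.
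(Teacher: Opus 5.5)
Your overall route is the paper's: tropicalize $\mathcal{E}$ (the paper uses a DJS tropicalization, but your union-of-frames matroid is also a tropicalization in the sense of Section~\ref{sec:setup}), apply Theorem~\ref{thm:trivial_chern}, compose $\Psi$ with $L\colon\Berg(\mathrm{M})\hookrightarrow\tilde{\mathcal{B}}(E)$ to get $\mathcal{F}$, and compare PL maps after pullback and twisting. The step you flag as delicate, however, is unnecessary, and the argument you sketch for it is wrong.

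It is unnecessary because item (2) of Theorem~\ref{thm:trivial_chern} already says that $\Psi$ is the PL map of a tropical bundle on $\Xi$. So on each maximal $\xi\in\Xi$ there is a basis $B_\xi$ of $\mathrm{M}$ with $\Psi|_\xi=\phi_{B_\xi}\circ s$ for an integral linear $s$. Since $L(B_\xi)$ is a frame of $E$, the map $L\circ\Psi$ is adapted to it on $\xi$ and defines $\mathcal{F}$. This is what the paper records by citing \cite[Thm.~2.14]{kaveh_tropical_2024}.

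Your mechanism takes the frame $L_\sigma$ of one maximal $\sigma$ with $U(\sigma)$ full-dimensional in $\tau$ and extends it \emph{linearly}. This fails, because two apartments can agree on $\Psi(U(\sigma))$ and diverge elsewhere in $\tau$, so there is no unique continuation. Concretely, take $N=\mathbb{Z}$, $u_1=1$, $u_2=-1$, $\mathrm{M}=U_{2,3}$ on $\{a,b,c\}$, and $\Phi=\phi_{(a,b)}\circ T$ on the fan of $\mathbb{P}^1$. The ordered bases $(a,c)$ on $x\geq 0$ and $(c,b)$ on $x\leq 0$ are adapted, so they are legitimate choices of $L_\sigma$, with $\mathcal{L}=\{a,b,c\}$. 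Here $\Xi$ is trivial, yet neither $\phi_{(a,c)}\circ T$ nor $\phi_{(c,b)}\circ T$ agrees with $\Phi$ on the whole line.

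The common basis $(a,b)$ is not any of the $L_\sigma$. Producing such a new basis in general is exactly the content of Proposition~\ref{prop:transitivity}, not something that falls out of choosing the basis of one cone of $\Delta$.

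Relatedly, $\Phi|_\sigma$ is not linear as a map to $\mathbb{R}^{\mathcal{L}}$. Only the coordinates indexed by $L_\sigma$ are characters; the others are minima over fundamental circuits as in \eqref{eqn:pl_iso}, so there are no characters $u_{\sigma,\ell}$ for $\ell\notin L_\sigma$. Throughout, \emph{linear} must be read in apartment coordinates $\pi_B$.
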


The previous theorems only apply when the rank of the bundle is at most the dimension of the fan plus one due to the assumption of affine independence. It is natural to ask whether affine independence of the Chern roots in Theorem~\ref{thm:trivial_chern} is necessary. Alas, the natural generalization of Theorem~\ref{thm:trivial_chern} dropping the independence assumption is false, see Section~\ref{ssec:failure}. 

\subsection{Applications} We provide two applications of Theorem~\ref{thm:analogous} as an obstruction to the existence of toric vector bundles with the relevant properties.

\begin{theorem}\label{thm:rank4}
	If $X_\Delta$ is a complete toric variety with no nonconstant maps to projective space, then $X_\Delta$ admits no nontrivial toric vector bundles of rank $r\leq 4$ with trivial Chern class and affinely independent equivariant Chern roots.
\end{theorem}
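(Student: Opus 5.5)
Apply Theorem~\ref{thm:analogous} to a toric vector bundle $\mathcal{E}$ of rank $r\le 4$ on $X_\Delta$ with trivial Chern class and affinely independent Chern roots. This gives a surjective toric morphism $\pi_U\colon X_\Delta\to X_\Xi$, where $\Xi$ is a complete fan coarsening the permutohedral fan $\Sigma_r$ of dimension $r-1\le 3$. It also gives $\mathcal{E}\otimes\mathcal{O}(\mathrm{div}\chi^u)\cong\pi_U^*\mathcal{F}$. By Corollary~\ref{cor:trivial} (in its vector bundle incarnation), if $\mathcal{E}$ is nontrivial then $\Xi$ is nontrivial, i.e. not a single linear space. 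The plan is to show that every nontrivial complete fan $\Xi$ coarsening $\Sigma_r$ with $r\le 4$ admits a nonconstant toric map $X_\Xi\to\mathbb{P}^m$. Composing with the surjection $\pi_U$ then gives a nonconstant map $X_\Delta\to\mathbb{P}^m$, contradicting the hypothesis.

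Since $\Xi$ may have a lineality space $L$, I first pass to the quotient fan $\Xi/L$. This is a complete pointed fan of dimension $d\le 3$ coarsening the image of $\Sigma_r$ in $(\mathbb{R}^r/\mathbb{R}\boldsymbol 1)/L$, and $d\ge1$ since $\Xi$ is nontrivial. A nonconstant map to projective space from $X_{\Xi/L}$ pulls back to $X_\Xi$, since $X_\Xi\to X_{\Xi/L}$ is a toric morphism; here I would note that an image of a permutohedral-type fan under a linear projection is what we get. So it suffices to produce a nontrivial nef (globally generated) line bundle on $X_{\Xi/L}$, equivalently a convex piecewise linear function on $\Xi/L$ that is not globally linear.

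The key step is this convexity statement. For $d=1$ it is clear, since $\Xi/L$ is the fan of $\mathbb{P}^1$. For $d=2$, every complete two-dimensional fan is projective, so every complete toric surface is projective. For $d=3$ I would exploit that the walls of $\Xi$ lie on the hyperplanes $x_i=x_j$, so its rays lie among the rays of $\Sigma_4$ or of its projections. I would first try a direct construction: for each pair $i<j$, the function $\max(x_i,x_j)$ is convex on $\Sigma_r$ and is linear on a cone of $\Xi$ whenever that cone does not cross the hyperplane $x_i=x_j$. I would look for a nonnegative combination $\sum_{\{i,j\}\in S}\max(x_i,x_j)$, or more generally the support function of a Minkowski summand of $\Pi_r$ built from a subset of the edges $[e_i,e_j]$ (a zonotope), that is linear on every cone of $\Xi$ but not globally linear. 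A natural candidate is to take $S$ to be the set of pairs $\{i,j\}$ such that the hyperplane $x_i=x_j$ is entirely a union of walls of $\Xi$. I would need to show that $S$ is nonempty and that this function is compatible with $\Xi$.

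The main obstacle is that $\Xi$ need not be cut out by full hyperplanes: its walls can be partial pieces of the hyperplanes $x_i=x_j$. This is exactly the flag-decoration phenomenon whose convexity properties the abstract emphasizes. If the zonotope approach fails, I would fall back on a finite case analysis of the complete coarsenings of $\Sigma_4$ up to the $S_4$-symmetry and the lineality quotient, verifying projectivity for each. The finiteness is guaranteed because $\Sigma_4$ has finitely many cones. For each case it suffices to exhibit one nonlinear convex PL function. Ideally the general statement from the flag-decoration section would assert that every complete coarsening of $\Sigma_r$ for $r\le4$ is projective, and I would invoke that directly.
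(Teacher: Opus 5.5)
Your overall reduction matches the paper's: apply Theorem~\ref{thm:analogous}, note that a nontrivial $\mathcal{E}$ forces $\Xi$ to be nontrivial, and compose a nonconstant map $X_\Xi\to\mathbb{P}^m$ with the surjection $\pi_U$. Your treatment of the cases where $\Xi$ modulo its lineality space has dimension at most $2$ is also fine. The gap is in the one case that matters, $r=4$ with $\Xi$ pointed of dimension $3$. There you never actually produce a nonlinear convex PL function on $\Xi$, and the routes you suggest do not work.

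The zonotope construction fails in general. The function $\sum_{\{i,j\}\in S}\max(x_i,x_j)$ is nonlinear exactly along the hyperplanes $x_i=x_j$ with $\{i,j\}\in S$. So it is linear on the cones of $\Xi$ only if each such hyperplane is a union of walls of $\Xi$. The normal fan of the simplex $\operatorname{conv}(e_1,\dots,e_4)$ (the fan of $\mathbb{P}^3$) coarsens $\Sigma_4$, but its walls are the cones $\{x_i=x_j\geq x_k \text{ for all } k\}$. Hence your set $S$ is empty, and no zonotopal summand of $\Pi_4$ is compatible with this fan. In general you would need arbitrary generalized permutohedra, not zonotopes.

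Your ``ideal'' fallback is false, and nothing like it appears in the paper; the flag-decoration section only shows that decorations induce coarsenings. Under the bijection of Morton et al.\ between convex rank tests and semigraphoids, projective coarsenings of $\Sigma_4$ correspond to structural semigraphoids. Already on four elements there are non-structural ones, e.g.\ the semigraphoid consisting of $1\perp 2\mid 3$, $1\perp 3\mid 4$, $1\perp 4\mid 2$: any submodular function satisfying these also satisfies $1\perp 3\mid 2$. So a case analysis ``verifying projectivity for each'' coarsening would break down, and in any event it has not been carried out.

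The missing idea is monotonicity under coarsening. If $\Xi$ refines a nontrivial fan $\Xi'$, the identity induces a toric morphism $X_\Xi\to X_{\Xi'}$; equivalently, a nonlinear convex PL function on $\Xi'$ is also one on $\Xi$. Since $\Sigma_4$ has finitely many coarsenings, every nontrivial one refines a coarsest nontrivial one. It therefore suffices to know that the coarsest nontrivial coarsenings of $\Sigma_4$ are projective. This is the input the paper takes from Studen\'y's classification of convex rank tests for $r\leq 4$, and it is exactly how the paper's proof runs. The composite $X_\Delta\to X_\Xi\to X_{\Xi'}$ is then a surjection onto a positive-dimensional projective variety, giving the forbidden nonconstant map to projective space.
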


\begin{theorem}\label{thm:existence}
	For each $n\geq 3$ there exists a complete toric variety of dimension $n$ with no nontrivial toric vector bundles of rank $r \leq n+1$ with trivial Chern class and affinely independent equivariant Chern roots.
\end{theorem}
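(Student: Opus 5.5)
The plan is to construct, for each $n\geq 3$, a complete fan $\Delta\subset N_\mathbb{R}\cong\mathbb{R}^n$ that admits no nontrivial surjective morphism of fans onto any coarsening $\Xi$ of a permutohedral fan $\Sigma_r$ with $r\leq n+1$. Then we apply Theorem~\ref{thm:analogous}.

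Let $\mathcal{E}$ be a toric vector bundle on $X_\Delta$ of rank $r\leq n+1$ with trivial Chern class and affinely independent Chern roots $u_1,\dots,u_r$. Theorem~\ref{thm:analogous} gives a surjective toric morphism $\pi_U:X_\Delta\to X_\Xi$ with $\mathcal{E}\otimes\mathcal{O}(\mathrm{div}\chi^u)\cong\pi_U^*\mathcal{F}$. The map $U$ is induced by the linear map $x\mapsto(\langle u_i,x\rangle)_i$. Affine independence of the $u_i$ means the differences $u_i-u_1$ are linearly independent, so $U$ is surjective onto $\mathbb{R}^r/\mathbb{R}\boldsymbol 1$. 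If we show that every such $\Xi$ must be trivial, then Corollary~\ref{cor:trivial} (in its vector bundle form, which I would derive from the tropical one by tropicalizing and noting that $\mathcal{F}$ on a point/linear space is trivial) shows $\mathcal{E}$ is trivial.

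\textbf{Reduction to a fan-theoretic condition.} Any nontrivial complete fan $\Xi$ coarsening $\Sigma_r$ has a nontrivial quotient by its lineality space $L$. The composite $N_\mathbb{R}\to(\mathbb{R}^r/\mathbb{R}\boldsymbol1)/L$ is surjective and sends cones of $\Delta$ into cones of the pointed complete fan $\Xi/L$. I would then pick a ray $\rho$ of $\Xi/L$ and a linear functional $\ell$ on $(\mathbb{R}^r/\mathbb{R}\boldsymbol1)/L$ that is positive on $\rho$ and nonpositive on all other rays. Such an $\ell$ need not exist in general. So instead I would use that coarsenings of $\Sigma_r$ are defined by hyperplanes of the form $\sum_{i\in S}x_i$-type walls: each wall of $\Xi$ lies in some hyperplane $x_i=x_j$. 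Hence the pullback of the piecewise linear function $\max(x_i-x_j,0)$, or more robustly the induced map to a coordinate line $\mathbb{R}\to$ (fan of $\mathbb{P}^1$) via $x_i-x_j$, gives cone-compatibility information. Concretely, $\Delta$ must then have a wall lying in the hyperplane $(u_i-u_j)^\perp$ whenever $\Xi$ has a wall there. It follows that the nontrivial walls of $\Xi$ produce, for each pair $i\neq j$ with $x_i=x_j$ a wall, a hyperplane $H=(u_i-u_j)^\perp\subset N_\mathbb{R}$ that is a union of cones of $\Delta$ locally where $\Xi$ has walls.

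The key property I want $\Delta$ to satisfy is a \emph{rigidity} condition: no hyperplane $H$ through the origin contains a codimension-one cone of $\Delta$ in the relevant configuration. More precisely, for any surjective linear $U$ to dimension $\leq n$ with rational coefficients, the image cones of $\Delta$ refine only the trivial fan. I would take $\Delta$ to be a fan whose maximal cones are simplicial with rays in general position, for instance a suitable generic complete simplicial fan. Examples of this type are given by Payne or Fulton (Fulton's nonprojective threefold generalized to dimension $n$), and in them no two adjacent walls are coplanar. Then any linear map $U$ sending $\Delta$ to a nontrivial fan must send some wall of $\Delta$ onto a wall of $\Xi$. Genericity forces the fibers of $U$ to be unions of cones, hence $\ker U$ is a union of cones of $\Delta$. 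Since $U\neq0$ nontrivially, the kernel is a proper linear subspace, and general position of the rays prevents this.

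\textbf{Main obstacle.} The hardest step is making this genericity argument rigorous: showing that a surjective morphism of fans from $\Delta$ onto a nontrivial complete fan forces some proper nonzero linear subspace to be a union of cones of $\Delta$. My first attempt would be to consider preimages of a ray of $\Xi/L$ and of the complementary closed half-spaces. The preimage of the codimension-one skeleton of $\Xi$ is a union of cones of $\Delta$ and is a nonempty union of linear hyperplane pieces. Connectedness and completeness would then propagate a wall across the whole hyperplane. Choosing rays of $\Delta$ with algebraically independent coordinates, so that no $n$ rays other than those spanning a common cone lie on a hyperplane, kills this. For the base case $n=3$ this recovers Payne's threefolds, and for $r\leq 4$ Theorem~\ref{thm:rank4} already suggests the model argument.
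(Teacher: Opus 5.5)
There is a genuine gap, and it sits in the case that carries the theorem, $r=n+1$. There $U\colon N_\mathbb{R}\to\mathbb{R}^{n+1}/\mathbb{R}\boldsymbol{1}$ is a linear isomorphism, so $\ker U=0$ and its fibers are points. Your mechanism (fibers of $U$ are unions of cones, so $\ker U$ is a union of cones, contradicting general position) therefore yields nothing.

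For $r\le n$ your idea can be salvaged. The minimal cone $U^{-1}(\operatorname{lin}\Xi)$ of the pulled-back fan $U^{-1}(\Xi)$ is a nonzero proper subspace that is a union of cones of $\Delta$, and general position of the rays forbids this.

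General position cannot by itself obstruct the isomorphism case. Take primitive $v_0,\dots,v_n\in N$ in general position with $\sum_i\lambda_iv_i=0$ and $\lambda_i\in\mathbb{Q}_{>0}$. Let $\Delta$ be any simplicial refinement of the fan they span, with all further rays generic rational vectors. (Algebraically independent coordinates are not available for a rational fan.) The linear map determined by $v_i\mapsto\lambda_i^{-1}e_{i+1}$, scaled to be integral, is an isomorphism. It carries every cone of $\Delta$ into a cone of the fan $\{x_i=\min_k x_k\}$ of $\mathbb{P}^n$, which is a nontrivial coarsening of $\Sigma_{n+1}$.

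So the ``rigidity'' you need does not follow from genericity, and the proposal never exhibits a fan for which it holds. Fulton's and Payne's threefolds are neither simplicial nor in general position; for instance $f_3=-f_5$. The route through Theorem~\ref{thm:rank4} relies on Studen\'y's classification, which is available only for $r\le 4$.

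What is missing is a real obstruction to $\Delta$ refining $U^{-1}(\Xi)$ when $U$ is an isomorphism. The paper uses two properties.

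First, $\Delta$ has no proper nontrivial coarsening. Hence the coarsening $U^{-1}(\Xi)$ of $\Delta$ must be $\Delta$ itself. This also disposes of $r\le n$, since then $U^{-1}(\Xi)$ would have lineality space containing $\ker U\neq 0$.

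Second, the matroid of wall normals of $\Delta$ is not graphic. The wall normals of a coarsening of $\Sigma_{n+1}$ lie among the $e_i^*-e_j^*$ and form a graphic matroid, and $U^*$ would identify it with that of $\Delta$ (Lemma~\ref{lem:graphic}).

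Both properties are then checked for the explicit Perling--Schr\"oer fans $\Delta_n(u)$. Their adjacency graph is the triangle graph $T(n+1)$ with every ridge lying in exactly three maximal cones, so Reading's ridge criterion forbids proper coarsenings. Five of their wall normals yield a $U_{2,4}$ minor.

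Genericity could even give you the second property: a ridge lying in four walls with pairwise non-parallel normals already gives a $U_{2,4}$ restriction. But the refinement of the $\mathbb{P}^n$ fan above has exactly such ridges and still maps onto a nontrivial coarsening. So the second property is useless without the first.
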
 

Theorem~\ref{thm:rank4} follows from Studen\'y's~\cite{studeny_probabilistic_2005} classification of fan coarsenings of the permutohedral fan for $r\leq 4$. Meanwhile, we deduce Theorem~\ref{thm:existence} by studying a concrete family of complete toric varieties originally studied by Perling and Schröer~\cite{perling_vector_2016}, see Example~\ref{ex:existence}.

\subsection{Flag decorations of permutohedra and sweep polytopes}  A \emph{sweep polytope} is a zonotope that is equal to the Minkowski sum of all segments of the form $[v_i,v_j]$ for a fixed collection of vectors $v_1,\dots,v_r$. Sweep polytopes were thoroughly studied by Padrol and Philippe~\cite{padrol_sweeps_2024}, but they appear under many names in the literature of permutation combinatorics and convex optimization. The chief example of a sweep polytope is the familiar permutohedron $\Pi_r$. 

The key ingredients of our proof are combinatorial gadgets we call \emph{flag decorations} on {sweep polytopes}. A flag decoration assigns a complete flag of a matroid to each vertex of a sweep polytope in a compatible fashion. We will be interested in the convexity properties of these flag decorations, principally on whether they induce fan coarsenings on the normal fans of the sweep polytope they decorate. Theorem~\ref{thm:permuto} shows that flag decorations on permutohedra always induce fan coarsenings of the corresponding permutohedral fan. This is not necessarily true for flag decorations of general sweep polytopes. 

We also show that any tropical bundle with trivial Chern class induces a flag decoration of a sweep polytope. The flags appearing in this decoration are precisely those induced by the \emph{Klyachko data} of the associated bundle. This is the source of the fan coarsenings appearing in Theorem~\ref{thm:trivial_chern}.  

\subsection{Organization} Section~\ref{sec:setup} gives a brief introduction to toric and tropical bundles and the necessary matroid theory. In Section~\ref{sec:decorations} we define flag decorations and study certain convexity properties. We prove Theorems~\ref{thm:trivial_chern} and~\ref{thm:analogous} in Section~\ref{sec:proofs} and discuss the necessity of the independence assumption. Finally, Section~\ref{sec:examples} is dedicated to the applications of our main theorem, Theorems~\ref{thm:rank4} and~\ref{thm:existence}.

\subsection*{Acknowledgments} The author thanks Matt Larson for helpful discussions and June Huh for bringing Payne's work~\cite{payne_toric_2008} to their attention.  

\subsection*{AI Disclosure} ChatGPT Pro 5.6 was used in the development of this paper for the following purposes: literature search, proofreading, producing figures, finding Examples~\ref{ex:non_convex_deco} and~\ref{ex:no_coarsening}, and computations for Example~\ref{ex:existence}.

\section{Toric vector bundles and matroids }\label{sec:setup}

In this section we collect the preliminaries on toric vector bundles and matroids necessary for the upcoming sections. Throughout we try to use standard terminology in the matroid, convex geometry and toric variety literature; we will point out when it deviates. As is customary in matroid theory, we omit the braces when writing singletons. We also denote the ray and line spanned by a single vector $v$ as $\mathbb{R}_{\geq 0}v$ and $\mathbb{R}v$ respectively. 

For further reading on the geometry of matroids we refer the reader to the survey by Ardila-Mantilla~\cite{ardila-mantilla_geometry_2023}, while for toric geometry we refer to the textbooks by Fulton~\cite{fulton_introduction_2016} and by Cox, Little and Schenck~\cite{cox_toric_2011}.

\subsection{The Bergman fan of a matroid} Let $\mathrm{M}$ be a loopless matroid of rank $r$ on a ground set $\mathrm{E}$. We denote its lattice of flats by $\Lambda(\mathrm{M})$. The set $\operatorname{Fl}(\mathrm{M})$ of flags of flats in $\Lambda(\mathrm{M})$ is partially ordered by \emph{refinement}. 

In the vector space $\mathbb{R}^\mathrm{E}$, $\{  e_s \}_{s\in \mathrm{E}}$ denotes the standard basis, $e_S$ denotes $\sum_{s\in S}e_s$, and $e_s^*$ and $e_S^*$ are defined similarly in the dual space $(\mathbb{R}^\mathrm{E})^*$. 

The \emph{Bergman fan} $\Berg(\mathrm{M})\subset \mathbb{R}^\mathrm{E}$ of $\mathrm{M}$ is the polyhedral fan composed of all the cones 
\[
C(\mathcal{F}) = \operatorname{cone}( e_{F^1}, e_{F^2}, \dots , e_{F^{k-1}})+ \mathbb{R} \boldsymbol{1}
\] 
where 
\[
\mathcal{F}: \varnothing \subset F^1 \subset \dots \subset F^k =\mathrm{E}
\] 
is a flag of flats of $\mathrm{M}$. The face poset of $\Berg(\mathrm{M})$ is isomorphic to the poset $\operatorname{Fl}(\mathrm{M})$.

\begin{remark}
 	In the literature, the previous definition is often called the \emph{lifted} Bergman fan, while the usual definition of Bergman fan refers to the projection of the previous fan on $\mathbb{R}^{\mathrm{E}}/\mathbb{R}\boldsymbol{1}$. In this paper, we will refer to the lifted fan as the Bergman fan for simplicity.
\end{remark}

The \emph{base polytope} $\mathrm{P}(\mathrm{M})\subset (\mathbb{R}^\mathrm{E})^*$ of $\mathrm{M}$ is the convex hull of all points of the form $e_B^*$ where $B$ is a basis of $\mathrm{M}$. The \emph{Gröbner fan} ${\GF}(\mathrm{M})\subset \mathbb{R}^\mathrm{E}$ of $\mathrm{M}$ is the normal fan of $\mathrm{P}(\mathrm{M})$. If $B$ is a basis of $\mathrm{M}$, denote by $\sigma(B)$ the corresponding maximal cone in $\GF(\mathrm{M})$. 

The support of $\Berg(\mathrm{M})$ is equal to the support of a subfan of $\GF(\mathrm{M})$. We say that a flag $\mathcal{F}$ of flats of $\mathrm{M}$ is adapted to a basis $B$ if each flat of $\mathcal{F}$ is the span of a collection of elements of $B$. Every flag is adapted to some basis of $\mathrm{M}$. If $B$ is a basis of $\mathrm{M}$, we call $\mathcal{A}(B,\mathrm{M})=\Berg(\mathrm{M})\cap \sigma(B)$ the \emph{apartment} of $B$ in $\Berg(\mathrm{M})$. Any apartment $\mathcal{A}(B,\mathrm{M})$ is  piecewise linearly isomorphic to $\mathbb{R}^B\cong \mathbb{R}^r$ by means of the projection map $\pi_B: \mathcal{A}(B,\mathrm{M})\to \mathbb{R}^B$ and the piecewise linear inverse defined coordinatewise:
\begin{align}
	\phi_{B} (w)_i &= \begin{cases}
	w_i &\text{ if }i\in B,\\
	\min  \lbrace w_j \mid j \in C_i(B)\setminus i \rbrace  &\text{ otherwise},
\end{cases} \label{eqn:pl_iso}
\end{align}
where $C_i(B)$ is the fundamental circuit of $i$ with respect to $B$. 

\subsection{Toric and tropical vector bundles} 

We now give a brief introduction to the theory of toric and tropical vector bundles on toric varieties. We follow the recent work of Kaveh and Manon~\cite{kaveh_toric_2023}~\cite{kaveh_tropical_2024} and refer to their papers for some of the missing definitions.

Let $X_\Delta$ be a toric variety with rational polyhedral fan $\Delta$ with respect to the lattice $N$ with dual lattice $M$. We allow $\Delta$ to have a nonzero lineality space $N_0$, in which case $X_\Delta$ is the toric variety associated to the strongly convex fan $\Delta/N_0$ but we retain the action by the larger torus associated to $N$. 

A toric vector bundle $\mathcal{E}$ of rank $r$ on the toric variety $X_\Delta$ with fiber $E$ over the identity, $\text{dim}(E)=r$, is given by a \emph{piecewise linear (PL) map} $\tilde{\Phi}: |\Delta| \to \tilde{\mathcal{B}}(E)$ from the support of $\Delta$ to the space of vector space valuations $\tilde{\mathcal{B}}(E)$ on $E$ satisfying the following property: for any cone $\delta$ of $\Delta$, there exists a frame $F$ of $E$ and an integral linear map $T_\delta:\text{span} |\delta|\to \mathbb{R}^r$ such that 
\[
\tilde{\Phi}|_{\delta}=\tilde{\phi}_{F} \circ T_\delta,
\]
where $\tilde{\phi}_F:\mathbb{R}^r\to \mathcal{A}(F,E)$ is the PL isomorphism between $\mathbb{R}^r$ and the apartment of $F$ in $\tilde{\mathcal{B}}(E)$. With this notation, we say that $\tilde{\Phi}$ is adapted to $F$ on $\delta$, and if $T_\delta=(u_1,\dots,u_r)$, we say that $u_1,\dots,u_r\in M$ are \emph{local equivariant Chern roots} of $\mathcal{E}$ at $\delta$. 

A \emph{tropical toric vector bundle} $\mathfrak{E}$ of rank $r$ on the fan $\Delta$, or a tropical bundle for short, is given by a {PL map} $\Phi: |\Delta| \to \Berg(\mathrm{M})$ from the support of $\Delta$ to the Bergman fan of a matroid $\mathrm{M}$ of rank $r$ satisfying the following property: for any cone $\delta$ of $\Delta$, there exists a basis $B$ of $\mathrm{M}$ and an integral linear map $T_\delta:\text{span} |\delta|\to \mathbb{R}^r$ such that 
\[
{\Phi}|_{\delta}={\phi}_{B} \circ T_\delta,
\]
where ${\phi}_B:\mathbb{R}^r\to \mathcal{A}(B,\mathrm{M})$ is the PL isomorphism defined in \eqref{eqn:pl_iso}. With this notation, we say that $\Phi$ is adapted to $B$ on $\delta$, and if $T_\delta=(u_1,\dots,u_r)$, we say that $u_1,\dots,u_r\in M$ are local equivariant Chern roots of $\mathfrak{E}$ at $\delta$. 

Given a tropical bundle $\mathfrak{E}$ with PL map $\Phi: |\Delta| \to \Berg(\mathrm{M})$, a matroid representation $L: \mathrm{M}\to E$ induces an inclusion $L:\Berg(\mathrm{M})\hookrightarrow \tilde{\mathcal{B}}(E)$ which in turn induces a toric vector bundle $\mathcal{E}$ by composing $\Phi$ with this inclusion. 

Conversely, let $\mathcal{E}$ be a toric vector bundle with PL map $\tilde{\Phi}: |\Delta|\to \tilde{\mathcal{B}}(E)$. Let $L: \mathrm{M}\to E$ be a matroid representation satisfying that $\operatorname{span} L(\mathrm{M})=E$ and  that for every $\delta \in \Delta$, there exists a basis $B$ of $\mathrm{M}$ such that $F=L(B)$ is a frame of $E$ which $\tilde{\Phi}$ adapts to. This determines a tropical bundle $\mathfrak{E}$ whose PL map $\Phi$ fits in the following diagram:
\[
\begin{tikzcd}
\lvert\Delta\rvert \arrow[r,"\tilde{\Phi}"] \arrow[rd,"\Phi"']& \tilde{\mathcal{B}}(E)\\
{} & \Berg(\mathrm{M}) \arrow[u,hookrightarrow,"L"']
\end{tikzcd}
\]
Any such choice of $\mathfrak{E}$ is called a \emph{tropicalization} of $\mathcal{E}$. 


\subsection{Toric line bundles and twisting} The space of vector space valuations on a one dimensional vector space and the Bergman fan of any loopless rank 1 matroid are isomorphic to $\mathbb{R}$. Hence the PL maps describing a line bundle $\mathcal{L}$ and its tropicalization $\mathfrak{L}$ are equivalent and given by a piecewise linear function $h:|\Delta|\to \mathbb{R}$ in the usual sense. In this case the classical and tropical descriptions agree exactly. For example, for any character $u\in M$, the line bundle $\mathcal{O}(\text{div}\chi^u)$ and its tropicalization $\mathfrak{O}(u)$ have the same PL map, $u:|\Delta|\to \mathbb{R}$.

Furthermore, one can prove that, if $\mathcal{E}$ is a toric vector bundle with PL map $\tilde{\Phi}:|\Delta|\to \tilde{\mathcal{B}}(E)$, then the PL map $\tilde{\Phi}\otimes h$ associated to $\mathcal{E}\otimes \mathcal{L}$ is given locally as $(\tilde{\Phi}\otimes h)|_{\delta}(x)=\tilde{\phi}_{F_\delta} (T_\delta(x) + h(x) \boldsymbol{1})$ for any $\delta\in \Delta$. Naturally, if $\mathfrak{E}$ is a tropical bundle with PL map $\Phi: |\Delta| \to \Berg(\mathrm{M})$, we can define $\mathfrak{E}\otimes \mathfrak{L}$ similarly. 

\subsection{Trivial vector bundles}\label{ssec:trivial} A toric vector bundle $\mathcal{E}$ on a toric variety $X_{\Delta}$ with fan $\Delta\subset N_\mathbb{R}$ is said to be \emph{trivial} if it is equivariantly isomorphic to a $T$-linearized free vector bundle $\mathcal{O}(\text{div}\chi^{u_1})\oplus \dots \oplus \mathcal{O}(\text{div}\chi^{u_r})$ with $u_1,\dots,u_r\in M$. In the PL map language, $\mathcal{E}$ is trivial if its PL map $\tilde{\Phi}:|\Delta|\to \tilde{\mathcal{B}}(E)$ adapts globally to a single frame, i.e. if $\tilde{\Phi}=\tilde{\phi_F}\circ (u_1,\dots,u_r)$ for some frame $F$ of $E$. 

In analogy, we say that a tropical bundle $\mathfrak{E}$ on a fan $\Delta$ is \emph{trivial} if its PL map ${\Phi}:|\Delta|\to \Berg(\mathrm{M})$ adapts globally to a single basis, i.e. if $\Phi={\phi_B}\circ (u_1,\dots,u_r)$ for some basis $B$ of $\mathrm{M}$. In the tropical setting being trivial is not strictly the same as being equal to $\mathfrak{O}(u_1)\oplus \dots \oplus \mathfrak{O}(u_r)$ but an \emph{extension} of $\mathfrak{O}(u_1)\oplus \dots \oplus \mathfrak{O}(u_r)$, see \cite[Sec. 8]{kaveh_tropical_2024} for a discussion.

\subsection{Equivariant Chern classes}\label{ssec:chern} The \emph{$i$-th equivariant Chern class} $c^T_i(\mathcal{E})$ of a toric vector bundle $\mathcal{E}$ on $X_\Delta$ with PL map $\tilde{\Phi}$ is a piecewise polynomial function on $\Delta$, given by the degree $i$ elementary symmetric polynomial $\epsilon_i:\mathbb{R}^r\to \mathbb{R}$ applied to the local Chern roots of $\mathcal{E}$ on each cone $\delta\in \Delta$. In other words, it is given conewise by $\epsilon_i \circ \pi_\mathcal{L} \circ \tilde{\Phi}|_\delta$ on each $\delta\in \Delta$ if $\delta$ is adapted to a frame $\mathcal{L}$. Analogously, the $i$-th equivariant Chern class $c^T_i(\mathfrak{E})$ of a tropical bundle $\mathfrak{E}$ with PL map $\Phi$ is given conewise by $\epsilon_i \circ \pi_B \circ {\Phi}|_\delta$ on each $\delta\in \Delta$ if $\delta$ is adapted to a basis $B$. 

The \emph{total equivariant Chern class} $c^T$ of a bundle (vector or tropical) is given by
\[
c^T = 1 + c_1^T + \dots + c_r^T
\]
where $r$ is the rank of the corresponding bundle. We say that a bundle has \emph{trivial equivariant Chern class} (abbreviated trivial Chern class) if $c^T$ is globally polynomial on $|\Delta|$. When the fan $\Delta$ is complete, this is equivalent to the existence of a \emph{global} multiset of equivariant Chern roots ${\boldsymbol{u}}=\lbrace u_1,\dots,u_r \rbrace$ for the bundle (toric or tropical), meaning that $T_\delta = (u_1,\dots,u_r)$ for every cone $\delta\in \Delta$.

\section{Flag decorations}\label{sec:decorations}

In this section we introduce our main combinatorial tool, flag decorations. First we define them for permutohedra, and show that they induce fan coarsenings of the permutohedral fan. After that, we define them on general sweep polytopes, but point out they do not enjoy the same convexity properties, which will be relevant in the upcoming sections. 

\subsection{Flag decorations of the permutohedron}\label{ssec:permuto_decorations}

The $r-1$ dimensional \emph{permutohedron} is the zonotope $\Pi_r \subset \mathbb{R}^r$ defined as
\[
\Pi_r = \sum\limits_{i<j} [e_i,e_j]
\] 
and it naturally lives in the affine hyperplane $x_1+\dots +x_r =\frac{r(r-1)}{2}$ perpendicular to the all-ones vector $\boldsymbol{1}$. Its normal fan $\Sigma_r\subset \mathbb{R}^r/\mathbb{R}\boldsymbol{1}$ is called the $r-1$ dimensional \emph{permutohedral fan}. Notably, the fan $\Sigma_r$ is equal to the projected Bergman fan of the Boolean matroid on $r$ elements. The cones of $\Sigma_r$, alternatively the faces of $\Pi_r$, are in correspondence with flags of subsets of $[r]$. In fact, $\Sigma_r$ is composed of all the polyhedral cones of the form 
\[
C(\mathcal{W})=\operatorname{cone}(e_{W^1}, e_{W^2}, \dots , e_{W^{k-1}})+\mathbb{R}\boldsymbol{1}
\]
where 
\[ 
\mathcal{W}: \varnothing \subset W^1 \subset \dots \subset W^{k-1} \subset W^k=[r]
\] 
is a flag of subsets of $[r]$. Note that a cone $C(\mathcal{V})$ is a face of $C(\mathcal{W})$ precisely when $\mathcal{W}$ is a {refinement} of $\mathcal{V}$. In particular, the face poset of $\Sigma_r$ is isomorphic to the poset $\operatorname{Fl}_r$ of flags of subsets of $[r]$. 

\begin{definition}
	Let $\mathrm{M}$ be a rank $r$ matroid on $\mathrm{E}$ and let $\operatorname{Fl}(\mathrm{M})$ be its poset of flags of flats. A \emph{flag decoration} of the permutohedron $\Pi_r$ by flags of $\mathrm{M}$ is a map of posets $F:{\operatorname{Fl}}_r \to \operatorname{Fl}(\mathrm{M})$ satisfying that for any flag 
	\[ 
	\mathcal{W}\colon{\varnothing \subset W^1 \subset \dots \subset W^{k-1} \subset W^k=[r]}
	\]
	we have that 
	\[
	F(\mathcal{W})\colon{\varnothing \subset F(\mathcal{W})^1 \subset \dots \subset F(\mathcal{W})^{k-1} \subset F(\mathcal{W})^k=\mathrm{E}}
	\] 
	has the same \emph{rank shape} as $\mathcal{W}$, meaning $\text{rk}_\mathrm{M} F(\mathcal{W})^i= |W^i|$ for every $1\leq i \leq k$.
\end{definition}
\begin{remark}
	For any nonempty proper subset $S\subset [r]$, if $F( \varnothing \subset S\subset [r] )=\varnothing\subset F(S) \subset \mathrm{E}$ we have that $\text{rk}_\mathrm{M}(F(S))=|S|$. Since $F$ preserves cover relations, it follows that for any flag 
	\[
	\mathcal{W}\colon{\varnothing \subset W^1 \subset \dots \subset W^{k-1} \subset W^k=[r]}
	\] 
	we have that 
	\[ 
	F(\mathcal{W})\colon{\varnothing \subset F(W^1) \subset \dots \subset F(W^{k-1}) \subset F(W^k)=\mathrm{E}}.
	\] 
\end{remark}

\subsubsection{Convex rank tests}

A fan $\Xi$ coarsening the permutohedral fan $\Sigma_r$ is given by a collection of edges $\mathcal{E}$ of the permutohedron whose endpoints correspond to maximal cones of $\Sigma_r$ that are merged to obtain $\Xi$. Morton, Pachter, Shiu, Sturmfels and Wienand~\cite{morton_convex_2009} studied the collections of edges $\mathcal{E}$ of $\Pi_r$ that determine fans that coarsen $\Sigma_r$ under the name \emph{convex rank tests}. They prove that such a collection determines a fan $\Xi=\Xi(\mathcal{E})$ coarsening $\Sigma_r$ if and only if $\mathcal{E}$ satisfies both of the following properties:
\begin{itemize}
	\item[$\square$] \emph{The square property:} For every square 2-face $P$ of $\Pi_r$, if one edge of $P$ is in $\mathcal{E}$, the opposite edge of $P$ is also in $\mathcal{E}$.  
	\item[$\hexagon$] \emph{The hexagon property:} For every hexagonal 2-face $P$ of $\Pi_r$, if two consecutive edges of $P$ are in $\mathcal{E}$, the opposite two edges of $P$ are also in $\mathcal{E}$.  
\end{itemize}
See Figure~\ref{fig:properties} for a visualization.

\begin{figure}
	\[
	\begin{tikzpicture}[scale=2.5,vertex/.style={circle, fill=black, inner sep=1.5pt}]
	\coordinate (A) at (0,0);
	\coordinate (B) at (1,0);
	\coordinate (C) at (1,1);
	\coordinate (D) at (0,1);
	
	\draw (A)--(B)--(C)--(D)--(A);
	\draw[red,dashed,line width=2pt] (A) -- (D);

	\node[vertex] at (A) {};
	\node[vertex] at (B) {};
	\node[vertex] at (C) {};
	\node[vertex] at (D) {};

	\node[] at (1.5,0.5) {\large$\Longrightarrow$};

	\coordinate (A') at (2,0);
	\coordinate (B') at (3,0);
	\coordinate (C') at (3,1);
	\coordinate (D') at (2,1);
	
	\draw (A')--(B')--(C')--(D')--(A');
	\draw[red,dashed,line width=2pt] (A') -- (D');
	\draw[red,dashed,line width=2pt] (B') -- (C');

	\node[vertex] at (A') {};
	\node[vertex] at (B') {};
	\node[vertex] at (C') {};
	\node[vertex] at (D') {};
	\end{tikzpicture}
	\]
	\vspace{0.5cm}
	\[
	\begin{tikzpicture}[
	x={(1cm,0cm)},
	y={(0.5cm,0.8660254cm)},
	scale=2,
	vertex/.style={circle, fill=black, inner sep=1.5pt},
	]
	\coordinate (A) at (0,0);
	\coordinate (B) at (1,0);
	\coordinate (C) at (1,1);
	\coordinate (D) at (0,2);
	\coordinate (E) at (-1,2);
	\coordinate (F) at (-1,1);

	\draw (A) -- (B) -- (C) -- (D) -- (E) -- (F) -- (A);
	\draw[red,dashed,line width=2pt] (F) -- (A) -- (B);

	\node[vertex, label=below:{}] at (A) {};
	\node[vertex, label=below:{}] at (B) {};
	\node[vertex, label=right:{}] at (C) {};
	\node[vertex, label=above:{}] at (D) {};
	\node[vertex, label=above:{}] at (E) {};
	\node[vertex, label=left:{}] at (F) {};

	\node[] at (1.5,1) {\large$\Longrightarrow$};

	\coordinate (A') at (3,0);
	\coordinate (B') at (4,0);
	\coordinate (C') at (4,1);
	\coordinate (D') at (3,2);
	\coordinate (E') at (2,2);
	\coordinate (F') at (2,1);

	\draw (A') -- (B') -- (C') -- (D') -- (E') -- (F') -- (A');
	\draw[red,dashed,line width=2pt] (F') -- (A') -- (B');
	\draw[red,dashed,line width=2pt] (C') -- (D') -- (E');

	\node[vertex, label=below:{}] at (A') {};
	\node[vertex, label=below:{}] at (B') {};
	\node[vertex, label=right:{}] at (C') {};
	\node[vertex, label=above:{}] at (D') {};
	\node[vertex, label=above:{}] at (E') {};
	\node[vertex, label=left:{}] at (F') {};
	\end{tikzpicture}
	\]
	\caption{The square and hexagon properties, respectively. The dashed red edges belong to the convex rank test $\mathcal{E}$.}
	\label{fig:properties}
\end{figure}

All the 2-faces of $\Pi_r$ are either squares or hexagons. Indeed, any 2-face of $\Pi_r$ corresponds to a flag of the form 
\[
\mathcal{V}\colon \varnothing \subset  V^1 \subset  \dots \subset \hat{V^k} \subset \dots \subset \hat{V^l} \subset \dots \subset V^{r}=[r]
\]
missing only subsets of size $k$ and $l$ with $k<l$. 

If $l>k+1$, the corresponding 2-face is a square: If $V^{k+1}\setminus V^{k-1}=\left\lbrace 1,2 \right\rbrace$ and $V^{l+1}\setminus V^{l-1} = \left\lbrace 3,4 \right\rbrace$, the 2-face has vertices and edges corresponding to the following flags
\begin{align*}
	\mathcal{V}[1|-] &\colon \varnothing \subset V^1 \subset  \dots \subset V^{k-1}\cup 1  \subset \dots \subset \hspace{0.5cm} \hat{V^{l}} \hspace{0.5cm} \subset \dots \subset [r] ,\\
	\mathcal{V}[-|3] &\colon \varnothing \subset V^1 \subset  \dots \subset \hspace{0.5cm} \hat{V^{k}}  \hspace{0.5cm} \subset \dots \subset {V^{l-1}}\cup 3 \subset \dots \subset [r] ,\\
	\mathcal{V}[1|3] & \colon \varnothing \subset V^1 \subset  \dots \subset V^{k-1}\cup 1  \subset \dots \subset {V^{l-1}}\cup 3  \subset \dots \subset [r], 
\end{align*}
with other flags defined similarly, arranged as pictured in Figure~\ref{fig:square}.

\begin{figure}
	\[
	\begin{tikzpicture}[scale=5,vertex/.style={circle, fill=black, inner sep=1.5pt}]
	\coordinate (AC) at (0,1);
	\coordinate (AD) at (1,1);
	\coordinate (BC) at (0,0);
	\coordinate (BD) at (1,0);

	\draw (AC) -- node[above] {$\mathcal{V}[1|-]$} (AD);
	\draw (BC) -- node[below] {$\mathcal{V}[2|-]$} (BD);
	\draw (AC) -- node[left] {$\mathcal{V}[-|3]$} (BC);
	\draw (AD) -- node[right] {$\mathcal{V}[-|4]$} (BD);

	\node[vertex, label=above:{$\mathcal{V}[1|3]$}] at (AC) {};
	\node[vertex, label=above:{$\mathcal{V}[1|4]$}] at (AD) {};
	\node[vertex, label=below:{$\mathcal{V}[2|3]$}] at (BC) {};
	\node[vertex, label=below:{$\mathcal{V}[2|4]$}] at (BD) {};
	\end{tikzpicture}
	\]
	\caption{A square 2-face of the permutohedron. The vertices and edges are labeled with the associated flags/preorders.}
	\label{fig:square}
\end{figure}

Otherwise if $l=k+1$, the corresponding 2-face is a hexagon: If $V^{k+2}\setminus V^{k-1}=\left\lbrace 1,2,3 \right\rbrace$, then the 2-face has vertices and edges corresponding to the following flags
\begin{align*}
	\mathcal{V}[12|3] & \colon \varnothing \subset V^1 \subset  \dots \subset V^{k-1} \subset \hspace{0.5cm} \hat{V^{k}} \hspace{0.5cm} \subset V^{k-1}\cup 1 \cup 2  \subset V^{k+2} \subset \dots \subset [r],\\
	\mathcal{V}[1|23] & \colon \varnothing \subset V^1 \subset  \dots \subset V^{k-1} \subset V^{k-1}\cup 1  \subset \hspace{0.6125cm} \hat{V^{k+1}} \hspace{0.6125cm} \subset V^{k+2} \subset \dots \subset [r],\\
	\mathcal{V}[1|2|3] & \colon \varnothing \subset V^1 \subset  \dots \subset V^{k-1} \subset V^{k-1}\cup 1  \subset V^{k-1}\cup 1 \cup 2  \subset V^{k+2} \subset \dots \subset [r],
\end{align*}
with other flags defined similarly, arranged as pictured in Figure~\ref{fig:hexagon}.

\begin{figure}
	\[
	\begin{tikzpicture}[
	x={(1cm,0cm)},
	y={(0.5cm,0.8660254cm)},
	scale=4,
	vertex/.style={circle, fill=black, inner sep=1.5pt},
	]
	\coordinate (AB) at (0,0);
	\coordinate (AC) at (1,0);
	\coordinate (CA) at (1,1);
	\coordinate (CB) at (0,2);
	\coordinate (BC) at (-1,2);
	\coordinate (BA) at (-1,1);

	\draw (AB) -- node[below] {$\mathcal{V}[1|23]$} (AC);
	\draw (AC) -- node[right] {$\mathcal{V}[13|2]$} (CA);
	\draw (CA) -- node[above right] {$\mathcal{V}[3|12]$} (CB);
	\draw (CB) -- node[above] {$\mathcal{V}[23|1]$} (BC);
	\draw (BC) -- node[left] {$\mathcal{V}[2|13]$} (BA);
	\draw (BA) -- node[below left] {$\mathcal{V}[12|3]$} (AB);

	\node[vertex, label=below:{$\mathcal{V}[1|2|3]$}] at (AB) {};
	\node[vertex, label=below:{$\mathcal{V}[1|3|2]$}] at (AC) {};
	\node[vertex, label=right:{$\mathcal{V}[3|1|2]$}] at (CA) {};
	\node[vertex, label=above:{$\mathcal{V}[3|2|1]$}] at (CB) {};
	\node[vertex, label=above:{$\mathcal{V}[2|3|1]$}] at (BC) {};
	\node[vertex, label=left:{$\mathcal{V}[2|1|3]$}] at (BA) {};

	\end{tikzpicture}
	\]
	\caption{A hexagonal 2-face of the permutohedron. The vertices and edges are labeled with the associated flags/preorders.}
	\label{fig:hexagon}
\end{figure}

\subsubsection{Convexity of flag decorations on the permutohedron}

For any flag decoration $F:\operatorname{Fl}_r\to \operatorname{Fl}(\mathrm{M})$, let $\mathcal{E}(F)$ be the collection of edges of $\Pi_r$ defined as follows: the edge $(\mathcal{W},\mathcal{W}')\in \mathcal{E}(F)$ if there exists an ordered basis $B=(b_1,\dots,b_r)$ of $\mathrm{M}$ to which both $F(\mathcal{W})$ and $F(\mathcal{W}')$ are adapted simultaneously: For every $0\leq i \leq r$, $F(W^i)=\text{cl}_{\mathrm{M}}(b_j \mid j\in W^i)$ and  $F((W')^i)=\text{cl}_{\mathrm{M}}(b_j \mid j\in (W')^i)$. 

The main result of this section is the following:

\begin{theorem}\label{thm:permuto}
	For any flag decoration $F:\operatorname{Fl}_r \to \operatorname{Fl}(\mathrm{M})$, the induced collection of edges $\mathcal{E}(F)$ of $\Pi_r$ induces a fan coarsening $\Xi(F)$ of $\Sigma_r$.
\end{theorem}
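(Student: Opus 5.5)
The plan is to verify the two conditions of the criterion of Morton, Pachter, Shiu, Sturmfels and Wienand~\cite{morton_convex_2009}: the square property and the hexagon property for $\mathcal{E}(F)$. Before that, I will give a simple description of when an edge lies in $\mathcal{E}(F)$.

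\emph{Step 1: an edge criterion.} An edge of $\Pi_r$ corresponds to a flag $\mathcal{V}$ that is missing only the level $k$. Its two endpoints are complete flags $\mathcal{W},\mathcal{W}'$. They agree except at level $k$, where $W^k=V^{k-1}\cup a$ and $(W')^k=V^{k-1}\cup b$ for some $a\neq b$.

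By the Remark, $F(\mathcal{W})$ and $F(\mathcal{W}')$ are complete flags of flats. They agree except possibly at level $k$, where the flats are $F(V^{k-1}\cup a)$ and $F(V^{k-1}\cup b)$. Both of these flats have rank $k$ and lie between $F(V^{k-1})$ and $F(V^{k+1})$.

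I claim that $(\mathcal{W},\mathcal{W}')\in\mathcal{E}(F)$ if and only if $F(V^{k-1}\cup a)\neq F(V^{k-1}\cup b)$.

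\emph{Necessity.} Suppose a common adapted basis exists. Then $b_a\in F(V^{k-1}\cup a)$ and $b_b\in F(V^{k-1}\cup b)$. If these two flats were equal, both $b_a$ and $b_b$ would lie in a rank-$k$ flat together with the $k-1$ basis elements indexed by $V^{k-1}$. That contradicts independence.

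\emph{Sufficiency.} Choose the basis inductively. For each $i\neq k,k+1$, pick the element indexed by the $i$-th entry of the permutation from $F(W^i)\setminus F(W^{i-1})$. For the two elements indexed by $a$ and $b$, pick $x\in F(V^{k-1}\cup a)\setminus F(V^{k-1})$ and $y\in F(V^{k-1}\cup b)\setminus F(V^{k-1})$. Because the two rank-$k$ flats differ, $x$ and $y$ are independent modulo $F(V^{k-1})$. Hence together with the earlier choices they span $F(V^{k+1})$. This gives a basis adapted to both flags.

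\emph{Step 2: the square property.} Take a square 2-face with missing levels $k<l$, $l>k+1$, labelled as in Figure~\ref{fig:square}.

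The edges $\mathcal{V}[-|3]$ and $\mathcal{V}[-|4]$ both miss level $k$. By Step 1, each lies in $\mathcal{E}(F)$ exactly when $F(V^{k-1}\cup 1)\neq F(V^{k-1}\cup 2)$. This condition depends only on these two subsets, which are the same for both edges.

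The same argument at level $l$ handles the edges $\mathcal{V}[1|-]$ and $\mathcal{V}[2|-]$. So the square property holds automatically.

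\emph{Step 3: the hexagon property.} This is the only step with real content. Take a hexagonal 2-face with $A=V^{k-1}$ and elements $1,2,3$. Write
\[
p_i=F(A\cup i), \qquad L_{ij}=F(A\cup i\cup j).
\]
After contracting $F(A)$ inside $G=F(A\cup 123)$, we are in a rank-3 matroid. There the $p_i$ are rank-one flats and the $L_{ij}$ are rank-two flats with $p_i,p_j\subset L_{ij}$.

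By Step 1, the edges of the hexagon are of two kinds:
\begin{itemize}
\item point edges $\mathcal{V}[ij|k]$, which lie in $\mathcal{E}(F)$ iff $p_i\neq p_j$;
\item line edges $\mathcal{V}[i|jk]$, which lie in $\mathcal{E}(F)$ iff $L_{ij}\neq L_{ik}$.
\end{itemize}

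Two consecutive edges meet at a vertex $i|j|k$, and one of them is a point edge and the other a line edge. Relabelling by $S_3$, I may assume the vertex is $1|2|3$, so that $p_1\neq p_2$ and $L_{12}\neq L_{13}$. The opposite edges are $\mathcal{V}[3|12]$ and $\mathcal{V}[23|1]$, so I need to show $L_{13}\neq L_{23}$ and $p_2\neq p_3$.

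\emph{First claim.} Suppose $L_{13}=L_{23}=:L$. Then $p_1,p_2\subset L$, so $L_{12}\supseteq p_1\vee p_2=L$. Since $p_1\neq p_2$, this join has rank two, so $L_{12}=L=L_{13}$, a contradiction.

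\emph{Second claim.} Suppose $p_2=p_3$. Then both $L_{12}$ and $L_{13}$ contain $p_1\vee p_2$, which has rank two. Hence both equal $p_1\vee p_2$, again contradicting $L_{12}\neq L_{13}$.

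So the hexagon property holds. The square and hexagon properties together, via the criterion of~\cite{morton_convex_2009}, show that $\mathcal{E}(F)$ determines a fan $\Xi(F)$ coarsening $\Sigma_r$.

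The main point to check carefully is Step 1, in particular that the inductive basis construction really is adapted to both flags simultaneously. Once that criterion is in place, the remaining steps are short incidence arguments.
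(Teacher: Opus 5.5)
Your proof is correct, but it reaches the square and hexagon properties through a different key lemma than the paper does.

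The paper never characterizes $\mathcal{E}(F)$. In each case it starts from a basis witnessing the hypothesis edges and builds by hand a new ordered basis witnessing the conclusion edges. In the hexagon case it first derives essentially your two incidence facts, $F(2)\vee F(3)=F(23)$ and $F(2)\not\subset F(13)$, and then chooses $b_1,b_2,b_3$ explicitly.

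Your Step 1 instead gives an exact description: an edge lies in $\mathcal{E}(F)$ if and only if its two endpoint decorations differ, i.e.\ $F(V^{k-1}\cup a)\neq F(V^{k-1}\cup b)$. The point you flagged does go through. One has $y\notin F(V^{k-1}\cup a)$, since otherwise $F(V^{k-1}\cup b)=\operatorname{cl}(F(V^{k-1})\cup y)$ would be contained in, hence equal to, $F(V^{k-1}\cup a)$. The symmetric statement holds for $x$, so your basis is adapted to both flags.

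This criterion has two consequences:
\begin{itemize}
\item By the Remark, the decorated flag at a vertex is read off levelwise from the single function $S\mapsto F(S)$. So the square property becomes a tautology: the level-$k$ flats do not see the choice made at level $l$.
\item The hexagon property reduces to two incidence checks in the rank-$3$ interval $[F(A),F(A\cup 123)]$, with no further basis bookkeeping.
\end{itemize}
Your route is cleaner, and it makes $\mathcal{E}(F)$ computable directly from the decoration. Its necessity half is exactly the fact the paper invokes inside the proof of Proposition~\ref{prop:transitivity}.

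What the paper's constructive route buys is a single ordered basis adapted to all three flags around a vertex of the hexagon, recorded as Corollary~\ref{cor:single}. Your criterion by itself only yields common bases for pairs of adjacent vertices. For the simultaneous adaptation used later, you would still need an argument like the one in Proposition~\ref{prop:transitivity}.
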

\begin{proof}
	We prove that the collection $\mathcal{E}(F)$ satisfies the square and hexagon properties.

	Let $P(\mathcal{V})$ be a 2-face of $\Pi_r$ given by a flag 
	\[
	\mathcal{V}\colon \varnothing\subset V^1 \subset  \dots \subset \hat{V^k} \subset \dots \subset \hat{V^l} \subset \dots \subset [r] .
	\]

	Case $\square$. Suppose $P(\mathcal{V})$ is a square. If $(\mathcal{V}[1|3],\mathcal{V}[2|3])  \in \mathcal{E}(F)$, there exists an ordered basis $A=(a_1,\dots,a_r)$ adapted to both $F(\mathcal{V}[1|3])$ and $F(\mathcal{V}[2|3])$ simultaneously. To ease notation, write $F(S)=F(V^{l-1}\cup S)$ for any $S\subset \{3,4\}$. We have $F(3)$ and $F(4)$ are covered by $F(34)$ and cover $F(\varnothing)$. 
	
	Let $b_l \in F(4)\setminus F(\varnothing)$ and $b_{l+1}$ be a complement to $F(4)$ in $F(34)$, then $I=(a_1,\dots,b_{l},b_{l+1})$ is an independent set of $\mathrm{M}$. Both $F(\mathcal{V}[1|4])$ and $F(\mathcal{V}[2|4])$ simultaneously adapt to $I$ up to the $l+1$-th echelon. We can extend $I$ to a basis $B$ such that both $F(\mathcal{V}[1|4])$ and $F(\mathcal{V}[2|4])$ simultaneously adapt to $B$ since both flags agree above the $l+1$-th echelon. The alternative case, when $ (\mathcal{V}[1|3], \mathcal{V}[1|4])  \in \mathcal{E}(F)$ follows by a similar argument. 
	 
	Case $\hexagon$. Suppose $P(\mathcal{V})$ is a hexagon and without loss of generality both $( \mathcal{V}[1|2|3], \mathcal{V}[2|1|3])$ and $( \mathcal{V}[1|2|3],\mathcal{V}[1|3|2])$ belong to $\mathcal{E}(F)$. Let $A=(a_1,\dots,a_r)$ be a basis adapted to $\mathcal{V}[1|2|3]$ and $\mathcal{V}[2|1|3]$ simultaneously. To ease notation, write $F(S)=F(V^{k-1}\cup S)$ for any $S\subset \lbrace 1,2,3 \rbrace$. We have that:
	\begin{itemize}
		\item $F(1)\vee F(2)=F(12)$,
		\item $F(12)\vee F(13)=F(123)$ and $F(12)\wedge F(13)=F(1)$.
	\end{itemize}
	We can also deduce that:
	\begin{itemize}
		\item $F(2)\vee F(3)=F(23)$: If $F(3)=F(2)$ then $F(1)\vee F(3)=F(12)$ but $F(13)=F(3)\vee F(13)=F(3)\vee F(1)\vee F(13)=F(12)\vee F(13)=F(123)$, a contradiction.
		\item $F(2)\not\subset F(13)$: Else $F(12)\vee F(13)=F(1)\vee F(2) \vee F(13) =F(13)\neq F(123)$.
	\end{itemize}
	If $F(1)=F(3)$, let $b_1$ be a complement to $F(3)$ in $F(13)$, else if $F(1)\neq F(3)$, let $b_1\in F(1)$. In any case, $b_1\not\in F(23)$ since $F(13)\wedge F(23)=F(3)$ and $b_1\not\in F(3)$. Let $b_2\in F(2)\setminus F(\varnothing)$ and $b_3\in F(3)\setminus F(\varnothing)$, then $(b_1,b_2,b_3)$ are independent and $I=(a_1,\dots,a_{k-1},b_1,b_2,b_3)$ is an independent set to which $F(\mathcal{V}[3|2|1])$, $F(\mathcal{V}[2|3|1])$ and $F(\mathcal{V}[3|1|2])$ adapt to simultaneously up to the $k+2$-th echelon. We can extend $I$ to a basis $B$ such that the previous flags simultaneously adapt to $B$ since they agree above the $k+2$-th echelon. Hence both $(\mathcal{V}[2|3|1],\mathcal{V}[3|2|1])$ and $(\mathcal{V}[3|2|1],\mathcal{V}[3|1|2])$ belong to $ \mathcal{E}(F)$.
\end{proof}

\begin{corollary}\label{cor:single}
	If $(\mathcal{W}_0,\mathcal{W}_1)$, $(\mathcal{W}_1,\mathcal{W}_2) \in \mathcal{E}(F)$ are adjacent edges, then all three $F(\mathcal{W}_0), F(\mathcal{W}_1)$ and $ F(\mathcal{W}_2)$ adapt to the same ordered basis simultaneously. One obtains this by applying the argument from the previous proof twice, either in case $\square$ or case $\hexagon$.
\end{corollary}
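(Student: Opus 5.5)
The plan is to revisit the two cases of the proof of Theorem~\ref{thm:permuto}. Two adjacent edges $(\mathcal{W}_0,\mathcal{W}_1)$ and $(\mathcal{W}_1,\mathcal{W}_2)$ of $\Pi_r$ share the vertex $\mathcal{W}_1$. Together they span a unique 2-face $P(\mathcal{V})$, namely the one whose flag $\mathcal{V}$ is the common coarsening of the two edge flags, obtained by deleting the two echelons $k<l$ in which $\mathcal{W}_0$ and $\mathcal{W}_2$ differ from $\mathcal{W}_1$. Two consecutive edges of a square are never opposite. So, relabelling $[r]$ as in Figures~\ref{fig:square} and~\ref{fig:hexagon}, I may assume one of the following:
\begin{itemize}
\item in case $\square$, $\mathcal{W}_1=\mathcal{V}[1|3]$, $\mathcal{W}_0=\mathcal{V}[2|3]$ and $\mathcal{W}_2=\mathcal{V}[1|4]$;
\item in case $\hexagon$, $\mathcal{W}_1=\mathcal{V}[1|2|3]$, $\mathcal{W}_0=\mathcal{V}[2|1|3]$ and $\mathcal{W}_2=\mathcal{V}[1|3|2]$.
\end{itemize}
In both cases all vertex flags of $P(\mathcal{V})$ agree outside the echelons $k-1,\dots,l+1$. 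The task is to build a single ordered basis adapted to all three flags, following the proof's pattern: keep an initial segment from an existing basis, modify the middle entries, and extend above.

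In case $\square$, let $A$ be adapted to $F(\mathcal{V}[2|3])$ and $F(\mathcal{V}[1|3])$. These two flags agree from echelon $k+1$ upward, and $A$ realizes both. I keep $a_1,\dots,a_{l-1}$. This segment already handles the lower block, where $\mathcal{V}[1|4]$ agrees with $\mathcal{V}[1|3]$. At positions $l,l+1$ I follow the proof: if $F(3)=F(4)$, take $b_l=a_l$, and otherwise take $b_l\in F(4)\setminus F(\varnothing)$, and then $b_{l+1}\in F(34)\setminus F(4)$. The point to check is that $b_l$ realizes both $F(3)$ and $F(4)$ together with $a_{l-1}$, whenever this is needed. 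The flag $\mathcal{V}[-|3]$ only requires $F(3)$ and $\mathcal{V}[-|4]$ only requires $F(4)$, so I will order the basis so that $F(V^{l-1}\cup 3)$ and $F(V^{l-1}\cup 4)$ are each spans of basis subsets. This means choosing $b_l$ spanning $F(3)$ and $b_{l+1}$ spanning $F(4)$ over $F(\varnothing)$. That choice is possible exactly when $F(3)\neq F(4)$, and if $F(3)=F(4)$ the positions collapse and a single ordering works. Above echelon $l+1$ the three flags agree, so I extend arbitrarily.

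In case $\hexagon$, let $A$ be adapted to $\mathcal{V}[1|2|3]$ and $\mathcal{V}[2|1|3]$, so $F(1)\vee F(2)=F(12)$ is realized by $a_k,a_{k+1}$. I then need $\mathcal{V}[1|3|2]$ as well. I will choose $b_1,b_2,b_3$ with $b_1$ spanning $F(1)$ over $F(\varnothing)$, $b_2$ spanning $F(2)$, and $b_3$ spanning $F(3)$. Then I check that $F(12)$ and $F(13)$ are spanned by $\{b_1,b_2\}$ and $\{b_1,b_3\}$ respectively. This uses the deductions from the proof: $F(2)\not\subset F(13)$, together with $F(12)\wedge F(13)=F(1)$. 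The degenerate coincidences such as $F(1)=F(3)$ are handled as in the theorem's proof. Finally, $(a_1,\dots,a_{k-1},b_1,b_2,b_3)$ is extended using the common upper part of the flags.

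The main obstacle is the degenerate subcases, where some of $F(1),F(2),F(3)$ coincide, or $F(3)=F(4)$. In these cases one chosen vector must serve two positions in different orderings. I expect the rank-shape condition together with the meet/join identities to force a consistent choice, and I would verify each coincidence pattern explicitly.
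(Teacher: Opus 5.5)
\textbf{Gap in the hexagon case.} Your overall strategy matches the paper's one-line proof: start from a basis $A$ for one edge and modify it inside the window of the 2-face. However, the hexagon case fails as written. With $F(S)=F(V^{k-1}\cup S)$, the three vertex flags $\mathcal{V}[1|2|3]$, $\mathcal{V}[2|1|3]$, $\mathcal{V}[1|3|2]$ involve only the flats $F(1),F(2),F(12),F(13)$ strictly between $F(\varnothing)$ and $F(123)$. The flat $F(3)$ plays no role.

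Your choice $b_3\in F(3)\setminus F(\varnothing)$ therefore breaks down when $F(3)=F(1)$, and this case is compatible with the hypotheses. Take $r=3$ and $\mathrm{M}$ the Boolean matroid on $\{p,q,s\}$, with
\begin{itemize}
\item[] $F(1)=F(3)=\{p\}$, $F(2)=\{q\}$, $F(12)=F(23)=\{p,q\}$, $F(13)=\{p,s\}$.
\end{itemize}
This is a flag decoration, and both edges lie in $\mathcal{E}(F)$ via the basis $(p,q,s)$. Your recipe, however, forces $b_1=b_3=p$.

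Your deferred remedy, handling $F(1)=F(3)$ "as in the theorem's proof", does not transfer. There the degeneracy is resolved by moving $b_1$ to a complement of $F(3)$ in $F(13)$. Here that would violate $\mathrm{cl}(F(\varnothing)\cup b_1)=F(1)$, which is required by both $\mathcal{V}[1|2|3]$ and $\mathcal{V}[1|3|2]$. Since you explicitly postpone the coincidence patterns, the hexagon case is not established.

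\textbf{The fix, which needs no case analysis.} The edge $(\mathcal{V}[1|2|3],\mathcal{V}[1|3|2])\in\mathcal{E}(F)$ forces $F(12)\neq F(13)$, hence $F(12)\wedge F(13)=F(1)$.
\begin{itemize}
\item[] \emph{Hexagon.} Keep $A$ and replace only its label-$3$ entry by some $b_3\in F(13)\setminus F(1)$. Then $\mathrm{cl}(F(1)\cup b_3)=F(13)$. Also $b_3\notin F(12)$, so $\mathrm{cl}(F(12)\cup b_3)=F(123)$, and all three flags are adapted.
\item[] \emph{Square.} With $F(S)=F(V^{l-1}\cup S)$, the edge $(\mathcal{V}[1|3],\mathcal{V}[1|4])$ already forces $F(3)\neq F(4)$. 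So your branch "if $F(3)=F(4)$ the positions collapse and a single ordering works" is vacuous, and also false as stated, since no basis adapts to both $\mathcal{V}[1|3]$ and $\mathcal{V}[1|4]$ in that situation. It suffices to keep $A$ and replace its label-$4$ entry by any $b_4\in F(4)\setminus F(\varnothing)$. Then $F(3)\wedge F(4)=F(\varnothing)$ gives $\mathrm{cl}(F(3)\cup b_4)=F(34)$.
\end{itemize}

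You also give two incompatible prescriptions in the square case. The first, $b_l\in F(4)$ and $b_{l+1}\in F(34)\setminus F(4)$, is the theorem's construction for $\mathcal{V}[1|4],\mathcal{V}[2|4]$ and would put the label-$3$ entry outside $F(3)$. Only the second prescription is correct.

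In general, the two edge hypotheses already exclude the coincidences $F(1)=F(2)$ and $F(3)=F(4)$ that you list as obstacles. The remaining one, $F(1)=F(3)$, disappears once $b_3$ is taken in $F(13)$ rather than in $F(3)$.
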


In Section~\ref{sec:proofs} we will need a stronger sense of transitivity, we want all the flags in any maximal cone to adapt simultaneously to one ordered basis. 

Any collection of edges $\mathcal{E}$ of $\Pi_r$ defines a partition of the vertices of $\Pi_r$ by grouping vertices connected to each other by edges in $\mathcal{E}$. If $\mathcal{E}$ is a convex rank test, the classes of vertices with respect to this partition are in correspondence with maximal cones of $\Xi(\mathcal{E})$. Morton et al.~\cite{morton_convex_2009} prove that for any class $D$ of vertices there exists a unique partial order $\mathcal{U}$ on $[r]$ (not necessarily a total preorder) such that $D=\lbrace \mathcal{W} \mid \mathcal{U}\preceq \mathcal{W} \rbrace$, where $\preceq$ indicates order refinement. 

\begin{proposition}\label{prop:transitivity}
	If $F:\operatorname{Fl}_r \to \operatorname{Fl}(\mathrm{M})$ is a flag decoration, then for any maximal cone $\xi\in \Xi(F)$ written uniquely as $\xi=\cup_{\mathcal{W}\in D} C(\mathcal{W})$, there exists an ordered basis $B_\xi$ to which all the flags $\lbrace F(\mathcal{W})\rbrace_{\mathcal{W}\in D}$ adapt to. 
\end{proposition}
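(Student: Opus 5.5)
We want, for each class $D$ of vertices of $\Pi_r$ (a maximal cone $\xi$ of $\Xi(F)$), a single ordered basis adapted to every $F(\mathcal{W})$ with $\mathcal{W}\in D$. The plan is to build the basis directly from the partial order $\mathcal{U}$ with $D=\{\mathcal{W}\mid \mathcal{U}\preceq\mathcal{W}\}$, rather than chaining Corollary~\ref{cor:single} along paths. Chaining alone does not suffice: adaptedness to a common basis is not transitive along a path of edges.

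For each $j\in[r]$ let $\downarrow j=\{i\mid i\le_{\mathcal{U}} j\}$ and $\downarrow^{\circ} j=\downarrow j\setminus j$. Both are order ideals of $\mathcal{U}$, so each appears as an initial segment $W^i$ of some linear extension $\mathcal{W}\in D$. The first task is well-definedness: the flat $F(S)$ for an order ideal $S$ should not depend on which vertex $\mathcal{W}\in D$ it is read from. This is automatic, because the Remark after the definition shows that $F(S)$ is determined by the subset $S$ alone, namely $F(\varnothing\subset S\subset[r])$, via the poset map.

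Next, for each $j$ choose $b_j\in F(\downarrow j)\setminus F(\downarrow^{\circ} j)$, which is possible since the ranks are $|\downarrow j|>|\downarrow^{\circ} j|$. The claim is that for every order ideal $S$ of $\mathcal{U}$ we have
\[F(S)=\operatorname{cl}_{\mathrm{M}}(b_j\mid j\in S).\]
Once this holds, every flag $F(\mathcal{W})$ with $\mathcal{W}\in D$ consists of the flats $F(W^i)$ with $W^i$ an order ideal, and so $B_\xi=(b_1,\dots,b_r)$ is adapted to all of them. Taking $S=[r]$ also shows $B_\xi$ is a basis.

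The claim will be proved by induction on $|S|$, adding a maximal element $j$ of $S$. Since $b_j\in F(\downarrow j)\subseteq F(S)$, it suffices to show $b_j\notin F(S\setminus j)$; rank counting then gives $F(S)=F(S\setminus j)\vee b_j$. Thus the key lattice identity to establish is
\[F(S\setminus j)\wedge F(\downarrow j)=F(\downarrow^{\circ} j).\]
I expect this to be the main obstacle. It is exactly where the convexity of the class $D$ (its edges lying in $\mathcal{E}(F)$) must enter, since for an arbitrary decoration the flats need not meet modularly.

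To prove the identity, I would induct on $|S\setminus\downarrow j|$, removing a maximal element $m\ne j$ of $S$ at a time. Write $T=S\setminus\{j,m\}$; then the elements $j,m$ are incomparable in $\mathcal{U}$. Consider a linear extension in $D$ passing through $T$, then $\{j,m\}$ in either order. The edge swapping $j$ and $m$ lies in $\mathcal{E}(F)$, because both vertices lie in $D$ and a class of a convex rank test is a union of edges of $\mathcal{E}$ (the cone $\xi$ contains the wall between them). Adaptedness of both flags to a common basis $A$ gives
\[F(T\cup j)\wedge F(T\cup m)=F(T),\]
exactly as the meet relations were used in the hexagon case of Theorem~\ref{thm:permuto}. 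Combining this with the inductive hypothesis for $T\cup j$ and the containment $F(\downarrow j)\subseteq F(T\cup j)$ then yields
\[F(S\setminus j)\wedge F(\downarrow j)\subseteq F(T\cup m)\wedge F(T\cup j)\wedge F(\downarrow j)=F(T)\wedge F(\downarrow j)=F(\downarrow^{\circ} j).\]
If this meet manipulation fails to close up, the fallback is to use Corollary~\ref{cor:single} on the relevant square or hexagon faces to obtain a common basis for the three flags involved.
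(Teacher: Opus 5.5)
Your proof is correct and is essentially the paper's argument: your choice $b_j\in F(\downarrow j)\setminus F(\downarrow^{\circ} j)$ is the paper's $b_i\in\partial_i F(U^{i-1})$, and your key identity $F(S\setminus j)\wedge F(\downarrow j)=F(\downarrow^{\circ} j)$ is the same statement as the paper's monotonicity $\partial_i F(U^{i-1})\subseteq \partial_i F(J)$, proved the same way by moving one incomparable element at a time across an edge between two vertices of $D$ (hence in $\mathcal{E}(F)$) and using $F(T\cup j)\wedge F(T\cup m)=F(T)$. Your meet chain does close, and it is in fact an equality since the reverse inclusion is trivial, so the fallback to Corollary~\ref{cor:single} is unnecessary.
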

\begin{proof}
	Let $\mathcal{U}$ be the partial order corresponding to $D$. For each $i\in [r]$ define $U^{i-1} = \lbrace j\in [r] \mid j<_\mathcal{U} i \rbrace$ the set of strict predecessors of $i$ with respect to $\mathcal{U}$. For any lower order ideal $J$ with respect to $\mathcal{U}$, meaning that if $j\in J$ and $k<_{\mathcal{U}} j$ then $k\in J$, such that $U^{i-1}\subset J$ and $i\not\in J$, define $\del_i F(J) = F(J\cup i)\setminus F(J)$. Since $i\not\in J$, $\text{rk}_{\mathrm{M}}F(J\cup i)-\text{rank}_{\mathrm{M}} F(J)=1$. 
	
	First, we want to prove that if $U^{i-1} \subset K\subset J$ are lower ideals with respect to $\mathcal{U}$ not containing $i$, then $\del_i F(K) \subset \del_i F(J)$. It suffices to prove this when $J=K\cup j$ for some $j\not\in K$. Note that $j$ has to be incomparable to $i$ in $\mathcal{U}$, else $J$ would contain $i$. Since $F$ is a flag decoration, we have that $F(K\cup i)$ and $F(K\cup j)$ cover $F(K)$ and are covered by $F(K\cup i \cup j)$. Let $\mathcal{W}_i$ be a full flag in $D$ extending $K\subset K\cup i \subset K\cup i\cup j$ and let $\mathcal{W}_j$ be the adjacent full flag extending $K\subset K\cup j \subset K\cup i \cup j$. Note that $\mathcal{W}_j$ also belongs to $D$ since it refines $\mathcal{U}$. Since $(\mathcal{W}_i,\mathcal{W}_j)$ belongs to $\mathcal{E}(F)$, we have $F(K\cup i)\neq F(K\cup j)$, and in particular $F(K\cup i)\wedge F(K\cup j)=F(K)$ and $F(K\cup i)\vee F(K\cup j)=F(K\cup i \cup j)$. It follows that $\del_i F(K)= F(K\cup i)\setminus F(K) \subseteq F(K\cup i \cup j)\setminus F(K\cup j) = \del_i F(K\cup j)$, what we wanted.

	For each $i\in [r]$, choose $b_i \in \del_i F(U^{i-1})$. We claim that $B=(b_1,\dots,b_r)$ is an ordered basis to which the flags $\lbrace F(\mathcal{W})\rbrace_{\mathcal{W}\in D}$ adapt to. Let 
	\[
	\mathcal{W}\colon \varnothing \subset W^1 \subset \dots \subset W^r=[r]
	\] 
	be a full flag in $D$ with $W^i = \lbrace w_1,\dots, w_i \rbrace$. By the previous argument, we have that $b_{w_i} \in \del_{w_i} F(W^{i-1})$, given that $U^{w_i-1}\subset W^{i-1}$ as $\mathcal{U}\preceq \mathcal{W}$ and $W^{i-1}$ is a lower ideal with respect to $\mathcal{U}$ not containing $w_i$. Inductively we obtain that for any $i\in [r]$, $\text{cl}_\mathrm{M}(b_{w_1}\dots b_{w_i})=F(W^i)$, which proves that $B$ is a basis and more importantly that $F(\mathcal{W})$ adapts to $B$.   
\end{proof}

\subsection{Flag decorations of sweep polytopes}

Let $\boldsymbol{u}=\lbrace u_1,\dots,u_r \rbrace $ be a multiset of $r$ linear functionals on $\mathbb{R}^n$. A \emph{sweep} $\mathcal{W}_x$ of $\boldsymbol{u}$ determined by a point $x\in \mathbb{R}^n$ is the total preorder $\leq_x$ on $[r]$ defined by $i\leq_x j$ if and only if $\langle u_i,x  \rangle \geq \langle u_j, x \rangle$, for $i,j\in[r]$. Such a total preorder determines a unique flag $\mathcal{W}_x: \varnothing \subset W^1 \subset W^2 \subset \dots \subset W^k=[r]$ of subsets of $[r]$ given in the following way: Let $\lambda_1>\dots>\lambda_k$ be all the values of $\langle u_j, x \rangle$ for $j\in [r]$ in decreasing order, then $W^i= \lbrace j \in [r] \mid \langle u_j, x \rangle\geq \lambda_i \rbrace$ for $1\leq i \leq k$. We will use the term sweep interchangeably for the total preorder and the flag of sets. The set $\operatorname{Sw}(\boldsymbol{u})$ of sweeps of $\boldsymbol{u}$ is partially ordered by refinement, which we denote by $\preceq$.  

The multiset $\boldsymbol{u}$ has an associated \emph{sweep polytope} 
\[ 
Z(\boldsymbol{u})=\sum\limits_{i<j}\left[u_i,u_j\right],
\] 
whose normal fan $\Sigma({\boldsymbol{u}})\subseteq N_\mathbb{R}$ we call the \emph{sweep fan}. The face poset of the zonotope $Z(\boldsymbol{u})$ is anti-isomorphic to the face poset of $\Sigma({\boldsymbol{u}})$. By the work of Padrol and Philippe~\cite{padrol_sweeps_2024}, the poset $\operatorname{Sw}(\boldsymbol{u})$ is isomorphic to the face poset of $\Sigma({\boldsymbol{u}})$.

\begin{definition}
	A \emph{flag decoration} of the sweep polytope $Z(\boldsymbol{u})$ is a map of posets $F:{\operatorname{Sw}}(\boldsymbol{u}) \to \operatorname{Fl}(\mathrm{M})$ satisfying that for any sweep 
	\[
	\mathcal{W}\colon{\varnothing \subset W^1 \subset \dots \subset W^{k-1} \subset W^k=[r]}
	\]
	we have that 
	\[
	F(\mathcal{W})\colon{\varnothing \subset F(\mathcal{W})^1 \subset \dots \subset F(\mathcal{W})^{k-1} \subset F(\mathcal{W})^k=\mathrm{E}}
	\]
	has the same \emph{rank shape} as $\mathcal{W}$, meaning $\text{rk}_\mathrm{M} F(\mathcal{W})^i= |W^i|$ for every $1\leq i \leq k$.
\end{definition}

\subsubsection{Flag decorations of affine permutohedra} \label{sssec:combo_permuto} 

If $\boldsymbol{u}$ is an affinely independent set, then the sweep polytope $Z(\boldsymbol{u})$ is a \emph{affine permutohedron}, i.e. a permutohedron up to affine transformation. If $T^*: (\mathbb{R}^r)^*\to (\mathbb{R}^n)^*$ is the linear map $e_i^* \to u_i$, then the affine independence of $\boldsymbol{u}$ says that $T^*|_{\boldsymbol{1}^\perp}$ is an isomorphism between $\boldsymbol{1}^\perp$ and the span of the differences $u_i-u_j$. The map $T^*$ maps the permutohedron $\Pi_r\subset (\mathbb{R}^r)^*$ to $Z(\boldsymbol{u})\subset (\mathbb{R}^n)^*$. Dually, the adjoint $T$ of $T^*$ induces an isomorphism between $\mathbb{R}^n/L$ and $\mathbb{R}^r/\mathbb{R}\boldsymbol{1}$ where $L=\{ x \in \mathbb{R}^n \mid \langle u_1,x  \rangle=\dots = \langle u_r, x \rangle \}$. This way $T$ is an isomorphism of fans between $\Sigma_r$ in $\mathbb{R}^r/\mathbb{R}\boldsymbol{1}$ and $\Sigma(\boldsymbol{u})$ in $\mathbb{R}^n/L$. In particular, we have that $\operatorname{Sw}(\boldsymbol{u})\cong \operatorname{Fl}_r$.  

Consequently, the convexity of flag decorations on permutohedra is preserved after affine isomorphism: Theorem~\ref{thm:permuto} and Proposition~\ref{prop:transitivity} hold when $F:\operatorname{Fl}_r\to \operatorname{Fl}(\mathrm{M})$ is a flag decoration of the sweep polytope $Z(\boldsymbol{u})$ of an affinely independent set.

\subsubsection{General failure of convexity}

A theorem of Reading~\cite[Cor. 1.3]{reading_coarsening_2010} says that a collection of edges $\mathcal{E}$ of a zonotope $Z$ determines a coarsening $\Xi(\mathcal{E})$ of the normal fan of $Z$ if and only if $\mathcal{E}$ satisfies the \emph{polygon property}: for every $2k$-gonal 2-face $P$ of $Z$, if $k-1$ consecutive edges of $P$ belong to $\mathcal{E}$, then so do the opposite $k-1$ consecutive edges of $P$. 

One could hope that flag decorations on general sweep polytopes enjoy the same convexity properties, but this can fail as soon as we step outside affine permutohedra. Indeed, the polygon property can fail for flag decorations on octagons, the simplest 2 dimensional even polygon that is a sweep polytope but not combinatorially a permutohedron. 

\begin{example}\label{ex:non_convex_deco}
	Let $\boldsymbol{u}=\lbrace (0,2),(2,2),(4,2),(1,6) \rbrace$. Up to sign and scaling, the pairwise differences $u_i-u_j$ are $(1,0),(1,4),(-1,4),(-3,4)$ hence the sweep polytope $Z(\boldsymbol{u})$ is an octagon whose edge directions are parallel to those four vectors. Each vertex and edge corresponds to a sweep in $\operatorname{Sw}(\boldsymbol{u})$, arranged as pictured in Figure~\ref{fig:octagon}. 

	Let $\mathrm{M}$ be the rank 4 matroid on $\mathrm{E}=\lbrace a,b,c,d,e,f \rbrace$ whose circuits are $\lbrace c,d,e,f \rbrace$, $ \lbrace a,b,e,f \rbrace$ and $\lbrace a,b,c,d \rbrace$. The matroid $\mathrm{M}$ is isomorphic to the dual of the rank 2 matroid whose parallel classes are $\lbrace a,b \rbrace$, $\lbrace c,d \rbrace$ and $\lbrace e,f \rbrace$.

	\begin{figure}
		\[
		\begin{tikzpicture}[
			scale=0.6,
			vertex label/.style={
				font=\small
			},
			edge label/.style={
				font=\small,
				fill=white,
				inner sep=1.5pt
			}
		]

		\coordinate (v0) at (11,4);
		\coordinate (v1) at (8,8);
		\coordinate (v2) at (0,8);
		\coordinate (v3) at (-1,4);
		\coordinate (v4) at (0,0);
		\coordinate (v5) at (3,-4);
		\coordinate (v6) at (11,-4);
		\coordinate (v7) at (12,0);

		\draw
			(v0) -- node[midway, edge label, above right] {$34|2|1$} (v1)
			(v1) -- node[midway, edge label, above]       {$4|123$} (v2)
			(v2) -- node[midway, edge label, above left]  {$14|2|3$} (v3)
			(v3) -- node[midway, edge label, left=3pt]    {$1|24|3$} (v4)
			(v4) -- node[midway, edge label, below left]  {$1|2|34$} (v5)
			(v5) -- node[midway, edge label, below]       {$123|4$} (v6)
			(v6) -- node[midway, edge label, below right] {$3|2|14$} (v7)
			(v7) -- node[midway, edge label, right=3pt]   {$3|24|1$} (v0);

		\node[vertex label, above right] at (v0) {$3|4|2|1$};
		\node[vertex label, above right] at (v1) {$4|3|2|1$};
		\node[vertex label, above]       at (v2) {$4|1|2|3$};
		\node[vertex label, above left]  at (v3) {$1|4|2|3$};
		\node[vertex label, below left]  at (v4) {$1|2|4|3$};
		\node[vertex label, below left]  at (v5) {$1|2|3|4$};
		\node[vertex label, below right] at (v6) {$3|2|1|4$};
		\node[vertex label, right]       at (v7) {$3|2|4|1$};

		\foreach \v in {v0,v1,v2,v3,v4,v5,v6,v7}
			\fill (\v) circle (3pt);

		\end{tikzpicture}
		\]
		\caption{The sweep polytope $Z(\boldsymbol{u})$ for $\boldsymbol{u}=\lbrace (0,2),(2,2),(4,2),(1,6) \rbrace$. The vertices and edges are labeled with the associated sweeps.}
		\label{fig:octagon}
		
		\[
		\begin{tikzpicture}[
			scale=0.6,
			vertex label/.style={
				font=\small
			},
			edge label/.style={
				font=\small,
				fill=white,
				inner sep=1.5pt
			}
		]

		\coordinate (v0) at (11,4);
		\coordinate (v1) at (8,8);
		\coordinate (v2) at (0,8);
		\coordinate (v3) at (-1,4);
		\coordinate (v4) at (0,0);
		\coordinate (v5) at (3,-4);
		\coordinate (v6) at (11,-4);
		\coordinate (v7) at (12,0);

		\draw
			(v0) -- node[midway, edge label, above right] {$bf|ae|cd$} (v1)
			(v1) -- node[midway, edge label, above]       {$b|acdef$} (v2)
			(v2) -- node[midway, edge label, above left]  {$bc|ad|ef$} (v3)
			(v3) -- node[midway, edge label, left=3pt]    {$c|abd|ef$} (v4)
			(v4) -- node[midway, edge label, below left]  {$c|d|abef$} (v5)
			(v5) -- node[midway, edge label, below]       {$cdef|ab$} (v6)
			(v6) -- node[midway, edge label, below right] {$f|e|abcd$} (v7)
			(v7) -- node[midway, edge label, right=3pt]   {$f|abe|cd$} (v0);

		\draw[red,dashed,line width=2pt]
			(v0) -- (v1)
			(v1) -- (v2)
			(v2) -- (v3)
			(v3) -- (v4)
			(v4) -- (v5)
			(v6) -- (v7)
			(v7) -- (v0);

		\node[vertex label, above right] at (v0) {$f|b|ae|cd$};
		\node[vertex label, above right] at (v1) {$b|f|ae|cd$};
		\node[vertex label, above]       at (v2) {$b|c|ad|ef$};
		\node[vertex label, above left]  at (v3) {$c|b|ad|ef$};
		\node[vertex label, below left]  at (v4) {$c|d|ab|ef$};
		\node[vertex label, below left]  at (v5) {$c|d|ef|ab$};
		\node[vertex label, below right] at (v6) {$f|e|cd|ab$};
		\node[vertex label, right]       at (v7) {$f|e|ab|cd$};

		\foreach \v in {v0,v1,v2,v3,v4,v5,v6,v7}
			\fill (\v) circle (3pt);

		\end{tikzpicture}
		\]
		\caption{The flag decoration $F:\operatorname{Sw}(\boldsymbol{u})\to \operatorname{Fl}(\mathrm{M})$. The edges of $\mathcal{E}(F)$ are marked in red, and do not satisfy the polygon property.}
		\label{fig:octa_dec}
	\end{figure}

	Consider the flag decoration $F:\operatorname{Sw}(\boldsymbol{u})\to \operatorname{Fl}(\mathrm{M})$ depicted in Figure~\ref{fig:octa_dec}. One can check that the edges marked in this figure constitute $\mathcal{E}(F)$. However, the collection $\mathcal{E}(F)$ does not satisfy the polygon property, hence it does not define a coarsening of sweep fan $\Sigma({\boldsymbol{u}})$.

\end{example}

\section{Tropical bundles with trivial Chern class}\label{sec:proofs}

\subsection{Flag decorations from tropical bundles} In this section we prove Theorems~\ref{thm:trivial_chern} and~\ref{thm:analogous} by proving that any tropical bundle with trivial Chern class induces a flag decoration on a sweep polytope associated to the Chern roots of the bundle. In particular, if the Chern roots are independent, the sweep polytope in question is a affine permutohedron. 

Throughout this section, let $\mathfrak{E}$ be a tropical bundle of rank $r$ on a complete fan $\Delta\subseteq N_\mathbb{R}$ with piecewise linear map $\Phi:|\Delta|\to \Berg(\mathrm{M})$. Suppose that $\mathfrak{E}$ has trivial equivariant Chern class with Chern roots $\boldsymbol{u}=\lbrace u_1,\dots,u_r \rbrace$. Let 
\begin{align*}
	T:N_\mathbb{R} &\longrightarrow \mathbb{R}^r,\\
	x&\longmapsto (\langle u_1, x \rangle,\dots,\langle u_r, x \rangle),
\end{align*}
and let $U: N_\mathbb{R} \to \mathbb{R}^r/\mathbb{R}\boldsymbol{1}$ be its composition with the natural projection. For every cone $\delta\in \Delta$ there exists an ordered basis $B_\delta$ of $\mathrm{M}$ such that $\Phi|_\delta = \phi_{B_\delta} \circ T$. 
	
Consider the sweep fan $\Sigma({\boldsymbol{u}})$ in $N_\mathbb{R}$ and write $C(\mathcal{W})$ for the cone in $\Sigma({\boldsymbol{u}})$ associated to the sweep $\mathcal{W}$ of $\boldsymbol{u}$. If $x\in \delta \cap \operatorname{relint} C(\mathcal{W})$, the flag of flats of $\mathrm{M}$ determined by $\Phi(x)$, say $\mathcal{F}_x\colon \varnothing\subset F^1_x \subset \dots \subset F^{k-1}_x \subset F^k=\mathrm{E} $, is given by $F^i_x=\text{cl}_{\mathrm{M}}\left\lbrace b_j \mid j\in W^i \right\rbrace$, where $B_\delta=(b_1,\dots,b_r)$ is an ordered basis of $\Phi$ at $\delta$. By the continuity of $\Phi$, the flags $\mathcal{F}_x$ must stay constant in the relative interiors of any given $C(\mathcal{W})\in \Sigma({\boldsymbol{u}})$, denote this flag by $F(\mathcal{W})$. This was also observed by Payne~\cite[Prop. 4.8]{payne_toric_2008}. Indeed, if $x\in \operatorname{relint}C(\mathcal{W})$, then $\Phi(x)$ is given coordinatewise by
\begin{align}\label{eqn:coordinatewise}
	\Phi(x)_e = \langle u_{w_i}, x \rangle \text{ if }e\in F(\mathcal{W})^i \setminus F(\mathcal{W})^{i-1},
\end{align}
where $w_i\in W^i \setminus W^{i-1}$. This gives us a well-defined assignment of flags $F=F_{\mathfrak{E}}: \operatorname{Sw}(\boldsymbol{u}) \to \operatorname{Fl}(\mathrm{M})$ induced by $\mathfrak{E}$. Furthermore, this assignment of flags preserves the refinement order. In brief, we obtain the following lemma.

\begin{lemma}\label{lem:flag_deco}
	 The function $F: \operatorname{Sw}(\boldsymbol{u}) \to \operatorname{Fl}(\mathrm{M})$ induced by $\mathfrak{E}$ is a flag decoration of $Z(\boldsymbol{u})$. 
\end{lemma}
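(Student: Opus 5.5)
To prove Lemma~\ref{lem:flag_deco}, three things must be checked: that $F$ is well defined, that it has the correct rank shape, and that it is order-preserving. I will use only the local description $\Phi|_\delta=\phi_{B_\delta}\circ T$ together with formula~\eqref{eqn:coordinatewise}.

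\emph{Well-definedness.} Fix a sweep $\mathcal{W}$ and a point $x\in\operatorname{relint}C(\mathcal{W})$. Choose a cone $\delta\ni x$ of $\Delta$; this is possible because $\Delta$ is complete. With $B_\delta=(b_1,\dots,b_r)$, unwinding $\phi_{B_\delta}$ shows that $\Phi(x)$ lies in the relative interior of the cone $C(\mathcal{F}_x)$ of $\Berg(\mathrm{M})$, where $F^i_x=\operatorname{cl}_\mathrm{M}\{b_j\mid j\in W^i\}$. For $e\notin B_\delta$, the minimum over the fundamental circuit is attained at the largest echelon meeting $C_e(B_\delta)\setminus e$, which is why the level sets of $\Phi(x)$ are exactly the differences of these closures.

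The flag $\mathcal{F}_x$ is recovered intrinsically from $\Phi(x)$ as its flag of upper level sets. Since $\Phi$ is single valued, this flag therefore does not depend on the choice of $\delta$.

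To see that $\mathcal{F}_x$ is constant on $\operatorname{relint}C(\mathcal{W})$, note that this relative interior is connected. It is covered by the finitely many closed sets $\delta\cap\operatorname{relint}C(\mathcal{W})$. On each such set, the formula above shows that $\mathcal{F}_x$ depends only on $\mathcal{W}$ and $B_\delta$, not on $x$. Whenever two of these sets meet, the intrinsic description forces their flags to agree. Connectedness then gives a single flag on all of $\operatorname{relint}C(\mathcal{W})$, and we call it $F(\mathcal{W})$.

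\emph{Rank shape.} Because $B_\delta$ is a basis, the elements $\{b_j\}_{j\in W^i}$ are independent. Hence $\operatorname{rk}_\mathrm{M}F(\mathcal{W})^i=|W^i|$ for each $i$. This also shows the inclusions are strict, with $F(\mathcal{W})^k=\mathrm{E}$.

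\emph{Order preservation.} This is the step needing the most care. Suppose $\mathcal{V}\preceq\mathcal{W}$, so that $C(\mathcal{V})$ is a face of $C(\mathcal{W})$ and $\mathcal{W}$ refines $\mathcal{V}$. Take $y\in\operatorname{relint}C(\mathcal{V})$ and choose $x\in\operatorname{relint}C(\mathcal{W})$ close to $y$. Since $\Delta$ is a finite complete fan, we can arrange that some cone $\delta$ contains both $y$ and a sequence of such points $x_n\to y$ in $\operatorname{relint}C(\mathcal{W})$: infinitely many of the $x_n$ lie in one cone, and that cone is closed.

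Use the same basis $B_\delta$ at $y$ and at the $x_n$. Every set $V^i$ is one of the $W^j$. Consequently
\[
F(\mathcal{V})^i=\operatorname{cl}_\mathrm{M}\{b_j\mid j\in V^i\}
\]
is one of the flats of $F(\mathcal{W})$, and $F(\mathcal{W})$ refines $F(\mathcal{V})$. Alternatively, one can argue by continuity: the level sets of $\Phi(x_n)$ can only merge in the limit, so the flag of $\Phi(y)$ is a coarsening of the flag of $\Phi(x_n)$.

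The only subtle point in the whole argument is this choice of a common cone $\delta$ containing both $y$ and nearby points of $\operatorname{relint}C(\mathcal{W})$, and the pigeonhole argument over finitely many closed cones handles it.
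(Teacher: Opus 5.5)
Your proposal is correct and takes the same route as the paper. The paper reads off $F(\mathcal{W})^i=\operatorname{cl}_\mathrm{M}\{b_j\mid j\in W^i\}$ from $\Phi|_\delta=\phi_{B_\delta}\circ T$, invokes continuity of $\Phi$ for constancy on $\operatorname{relint}C(\mathcal{W})$, and simply asserts the rank shape and order preservation; your upper-level-set description of the flag, the connectedness argument, and the pigeonhole choice of a common cone $\delta$ containing $y$ and nearby points of $\operatorname{relint}C(\mathcal{W})$ make those implicit steps rigorous.
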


We are now ready to prove Theorem~\ref{thm:trivial_chern} and its companion Theorem~\ref{thm:analogous}.

\begin{proof}[Proof of Theorem~\ref{thm:trivial_chern}]
	Suppose that ${\boldsymbol{u}}=\{ u_1,\dots,u_r \} $ consists of affinely independent characters.
	
	The PL map $\Phi\otimes u$ of $\mathfrak{E} \otimes \mathfrak{O}(u)$ is given on every cone $\delta\in \Delta$ by $(\Phi\otimes u) |_\delta(x) = \phi_{B_\delta}  (T(x)+u(x)\boldsymbol{1})$. Since we are interested in the class of $\Phi$ up to tensoring with $u$, we may assume that $\ker T =\lbrace x\in N_\mathbb{R} \mid \langle u_1, x \rangle=\dots=\langle u_r, x \rangle \rbrace$. We can choose an integral injection $s: \mathbb{R}^r/\mathbb{R}\boldsymbol{1} \to \mathbb{R}^r$ such that
	\[
		\begin{tikzcd}
			N_\mathbb{R} \arrow[rr,"T"] \arrow[rd,"U"'] & &  \mathbb{R}^r\\
			{} & \mathbb{R}^r/\mathbb{R}\boldsymbol{1} \arrow[ru,"s"'] & 
		\end{tikzcd}
	\]
	for example, $s(x)=(x_1-x_r,\dots,x_{r-1}-x_r,0)$. For simplicity, we write $\Phi$ for $\Phi\otimes u$ with the $u\in M$ that makes the previous statement true. 
	
	Consider the sweep fan $\Sigma(\boldsymbol{u})$, by the discussion in Section~\ref{sssec:combo_permuto}, $Z(\boldsymbol{u})$ is a affine permutohedron of dimension $r-1$. Then by Lemma~\ref{lem:flag_deco} and Theorem~\ref{thm:permuto}, the flag decoration $F_{\mathfrak{E}}: \operatorname{Sw}(\boldsymbol{u}) \to \operatorname{Fl}(\mathrm{M})$ induces a fan ${\Xi}$ coarsening $\Sigma(\boldsymbol{u})$. Furthermore, for any maximal $\delta\in \Delta$, there exists a maximal $\xi \in {\Xi}$ containing $\delta$ since a codimension one wall of $\Sigma(\boldsymbol{u})$ intersects the relative interior of $\delta$ only if its corresponding edge of $Z(\boldsymbol{u})$ belongs to $\mathcal{E}(F)$. This means that $id: \Delta \to \Xi$ is a morphism of fans. This way, $\Phi$ defines a tropical bundle $\mathfrak{E}'$ on $\Xi$ given that for any maximal cone $\xi \in \Xi$, there exists an ordered basis $B_\xi$ such that $\Phi|_\xi = \phi_{B_\xi}\circ {T}$. To see this, take the ordered basis given by Proposition~\ref{prop:transitivity}, equality is guaranteed by equation \eqref{eqn:coordinatewise}. Hence, the following diagram commutes:
	\[
		\begin{tikzcd}
			\lvert \Delta \rvert \arrow[r,"id"] \arrow[rd,"\Phi "'] & \lvert \Xi \rvert \arrow[d,"\Phi"]\\
			{} & \Berg(\mathrm{M})
		\end{tikzcd}
	\]
	or equivalently $\mathfrak{E}\cong id^* \mathfrak{E}'$.
	
	Now, $\Sigma(\boldsymbol{u})$ and $\Sigma_r$ are isomorphic up to the lineality space $\ker U$ by means of $U$. Let $\tilde{\Xi}$ be the fan coarsening $\Sigma_r$ obtained from the edges corresponding to $\mathcal{E}(F)$, so the face complexes of $\Xi$ and $\tilde{\Xi}$ are identical. Additionally, ${U}:\Xi \to \tilde{\Xi}$ is a surjective morphism of fans. Let $\Psi: |\tilde{\Xi}| \to \Berg(\mathrm{M})$ be the PL map given on a maximal cone $\xi\in \tilde{\Xi}$ by $\Psi|_\xi = \phi_{B_\xi} \circ s$ where $B_\xi$ is chosen as in the definition of $\mathfrak{E}'$. By definition, $\Psi$ defines a tropical bundle $\mathfrak{F}$ on $\tilde{\Xi}$.
	
	Putting everything together we obtain that the following diagram commutes
	\[
		\begin{tikzcd}
			\lvert \Delta \rvert \arrow[r,"id"] \arrow[rdd,"\Phi"'] & \lvert \Xi \rvert \arrow[dd,"\Phi"'] \arrow[r,"{U}"] & \lvert \tilde{\Xi} \rvert \arrow[ldd,"\Psi"]\\
			\\
			{} & \Berg(\mathrm{M})
		\end{tikzcd}
	\]
	so indeed $\mathfrak{E} \cong id^* {U}^* \mathfrak{F}$. 
\end{proof}

Having now completed the proof of Theorem~\ref{thm:trivial_chern}, Corollary~\ref{cor:trivial} follows immediately.

\begin{proof}[Proof of Corollary~\ref{cor:trivial}]
	If $\mathfrak{E}$ is trivial, $\Phi=\phi_B\circ T$ for some basis $B$.  By definition, $\mathcal{E}(F)$ consists of all edges of $\Pi_r$, so $\Xi$ is trivial. Conversely, if $\Xi$ is trivial, then $\mathcal{E}(F)$ consists of all edges of $\Pi_r$, then by Proposition~\ref{prop:transitivity} there exists a basis $B$ such that $\Phi\otimes u =\phi_B\circ T$ for some character $u\in M$. 
\end{proof}

\begin{proof}[Proof of Theorem~\ref{thm:analogous}]
	Let $\mathcal{E}$ be a rank $r$ toric vector bundle on a complete toric variety $X_\Delta$ with trivial equivariant Chern class and affinely independent global Chern roots $\boldsymbol{u}=\lbrace u_1,\dots,u_r \rbrace$. Let $\mathfrak{E}$ be a DJS tropicalization with matroid representation $L: \mathrm{M}(\mathcal{E})\to E$. Let $\tilde{\Phi}: |\Delta| \to \tilde{\mathcal{B}}(E)$ and $\Phi: |\Delta| \to \Berg(\mathrm{M}(\mathcal{E}))$ be the PL maps associated to $\mathcal{E}$ and to $\mathfrak{E}$. By Theorem~\ref{thm:trivial_chern} there exist $\Xi$, $\Psi$ and $u$ that fit in the following diagram.
	\[
	\begin{tikzcd}
		\lvert\Delta\rvert \arrow[r,"{U}"] \arrow[rd,"\Phi\otimes u"'] \arrow[rdd,bend right=30,"\tilde{\Phi}\otimes u"'] & \lvert \Xi \rvert \arrow[d,"\Psi"] \arrow[dd,bend left=70, dashed, "\tilde{\Psi}"]\\
		{} & \Berg(\mathrm{M}) \arrow[d,hookrightarrow,"L"]\\
		{} & \tilde{\mathcal{B}}(E)
	\end{tikzcd}
	\]
	The existence of the PL map $\tilde{\Psi}: |\Xi| \to \tilde{\mathcal{B}}(E)$ in the dashed arrow is a consequence of~\cite[Thm. 2.14]{kaveh_tropical_2024}. If $\mathcal{F}$ is the toric vector bundle associated to $\tilde{\Psi}$, then $\pi_{{U}}^*\mathcal{F}$ is equivariantly isomorphic to $\mathcal{E}\otimes \mathcal{O}(\text{div}\chi^u)$.
\end{proof}

\subsection{Beyond independent Chern roots} \label{ssec:failure} The natural generalization of Theorem~\ref{thm:trivial_chern} that arises from our proof while dropping the independence assumption is the following:
\begin{quote}
	Suppose that $\mathfrak{E}$ has trivial Chern class and Chern roots $\boldsymbol{u}=\lbrace u_1,\dots,u_r \rbrace$. Let ${U}:N_\mathbb{R}\to \mathbb{R}^r/\mathbb{R}\boldsymbol{1}$ be the linear map given by ${U}(x)=(\langle u_1, x \rangle,\dots,\langle u_r, x \rangle)$ and let $\boldsymbol{v}=\lbrace e_1^*,\dots,e_r^*\rbrace|_{\text{im}({U})}$ be the restriction of the standard dual basis to $\text{im}({U})$. Then there exist the following:
	\begin{enumerate}
		\item a complete fan $\Xi$ on $\text{im}({U})$ coarsening the sweep fan $\Sigma(\boldsymbol{v})$,
		\item a tropical bundle $\mathfrak{F}$ of rank $r$ on $\Xi$ with PL map $\Psi: |\Xi|\to \Berg(\mathrm{M})$ and
		\item a character $u$,
	\end{enumerate}
	such that ${U}: \Delta \to \Xi$ is a morphism of fans and $\mathfrak{E}\otimes \mathfrak{O}(u)$ is isomorphic to $ {U}^* \mathfrak{F}$.
\end{quote}
However, even in this generality the statement is false. In fact, we can upgrade Example~\ref{ex:non_convex_deco} into an example of a tropical bundle with trivial Chern class that cannot factor through a fan coarsening of the sweep fan of its Chern roots. The difficulty lies in the fact that, in the absence of convexity for general flag decorations, the source fan $\Delta$ and the sweep fan $\Sigma(\boldsymbol{u})$ need not be compatible with each other.

\begin{figure}
		\begin{tikzpicture}[
			sweep ray/.style={
				draw=red!70!black,
				line width=2pt,
				dashed,
				-{Latex[length=2mm]}
			},
			sweep ray prime/.style={
				draw=red!70!black,
				line width=0.75pt,
				-{Latex[length=2mm]}
			},
			sigma ray/.style={
				draw=green!75!black,
				line width=1.2pt,
				-{Latex[length=2mm]}
			},
			sigma ray prime/.style={
				draw=green!75!black,
				line width=2.5pt,
				-{Latex[length=2mm]}
			},
			sweep label/.style={
				font=\small,
				text=red!70!black,
				fill=white,
				inner sep=1pt
			},
			sigma label/.style={
				font=\small,
				text=green!75!black,
				fill=white,
				inner sep=1pt
			}
		]

		\coordinate (O) at (0,0);


		\draw[sigma ray]
			(O) -- (3.58,1.79)
			node[sigma label,pos=0.74,above] {$\tau_1$};

		\draw[sigma ray]
			(O) -- (2.83,2.83)
			node[sigma label,pos=0.76,above left] {$\tau_2$};

		\draw[sigma ray]
			(O) -- (-3.58,1.79)
			node[sigma label,pos=0.72,above] {$\tau_3$};

		\draw[sigma ray]
			(O) -- (-4.00,0)
			node[sigma label,left] {$\tau_4$};

		\draw[sigma ray] (O) -- ( 3.88, 0.97); 
		\draw[sigma ray] (O) -- (-3.88,-0.97); 
		\draw[sigma ray] (O) -- (-3.20,-2.40); 
		\draw[sigma ray prime] (O) -- ( 0.00,-4.00); 
		\draw[sigma ray] (O) -- ( 3.88,-0.97); 


		\draw[sweep ray]
			(O) -- (3.20,2.40)
			node[sweep label,above right] {$\rho_1$};

		\draw[sweep ray]
			(O) -- (0,4.00)
			node[sweep label,above] {$\rho_2$};

		\draw[sweep ray]
			(O) -- (-3.88,0.97)
			node[sweep label,above left] {$\rho_3$};

		\draw[sweep ray]
			(O) -- (-3.88,-0.97)
			node[sweep label,below left] {$\rho_4\color{black}{ = }\color{green!75!black}{\tau_5}$};

		\draw[sweep ray]
			(O) -- (-3.20,-2.40)
			node[sweep label,below left] {$\rho_5\color{black}{ = }\color{green!75!black}{\tau_6}$};

		\draw[sweep ray prime]
			(O) -- (0,-4.00)
			node[sweep label,below] {$\rho_6\color{black}{ = }\color{green!75!black}{\tau_7}$};

		\draw[sweep ray]
			(O) -- (3.88,-0.97)
			node[sweep label,below right] {$\rho_7\color{black}{ = }\color{green!75!black}{\tau_8}$};

		\draw[sweep ray]
			(O) -- (3.88,0.97)
			node[sweep label,right] {$\rho_8\color{black}{ = }\color{green!75!black}{\tau_9}$};

		\fill (O) circle (1.7pt);
		\node[font=\small,below right] at (O) {};

		\end{tikzpicture}
		\caption{The rays of the fans $\Sigma(\boldsymbol{u})$ (red) and $\Delta$ (green). The dashed red rays are the rays of $\Sigma(\boldsymbol{u})$ corresponding to edges in $\mathcal{E}(F)$. All the rays of $\Sigma(\boldsymbol{u})$ but $\rho_6$ are dashed.}
		\label{fig:no_coarsening}
	\end{figure}

\begin{example}\label{ex:no_coarsening}
	Let $\boldsymbol{u}$ and $\mathrm{M}$ be as in Example~\ref{ex:non_convex_deco}. The eight rays, labeled $\rho_i$, of the sweep fan $\Sigma(\boldsymbol{u})$ are spanned by
	\begin{align*}
		f_1 &= (4,3), & f_2&= (0,1), & f_3&= (-4,1), & f_4&= (-4,-1),\\
		f_5&= (-4,-3), & f_6&= (0,-1), & f_7&= (4,-1), & f_8&= (4,1).
	\end{align*}
	Fix $N=\mathbb{Z}^2$ and let $\Delta$ be the complete fan on $N_\mathbb{R}$ with nine rays, labeled $\tau_i$, which are spanned by
	\begin{align*}
		g_1 &= (2,1), & g_2&= (1,1), & g_3&= (-2,1), & g_4&= (-1,0),
	\end{align*}
	as well as $g_5=f_4,g_6=f_5,g_7=f_6,g_8=f_7$ and $g_9=f_8$. The image of ${U}$ is 2-dimensional, so we may treat $N_\mathbb{R}=\text{im}({U})$. See Figure~\ref{fig:no_coarsening} to see how these two fans overlap.

	Let $\Phi: |\Delta| \to \Berg(\mathrm{M})$ be the PL map defined globally as
	\[
	\Phi=(\min(u_2,u_4),u_4,u_1,\min(u_1,u_2),\min(u_2,u_3),u_3).
	\]
	One can check that this map defines a tropical bundle on $\Delta$ and also induces the flag decoration $F$ from Example~\ref{ex:non_convex_deco}, which does not create a fan coarsening of $\Sigma(\boldsymbol{u})$. 

	Moreover, there cannot exist a nontrivial fan $\Xi$ coarsening $\Sigma(\boldsymbol{u})$ such that both every cone of $\Delta$ is contained in a cone of $\Xi$ and $\Phi$ defines a tropical bundle on $\Xi$. This is because of the following two opposing facts: Any such fan $\Xi$ would delete the rays $\rho_1,\rho_2$ and $\rho_3$, and that would require deleting $\rho_6$ because of the polygon property. But any fan on which $\Phi$ defines a tropical bundle requires $\rho_6$ as a ray, since there exists no basis of $\mathrm{M}$ that adapts to $\Phi$ the cones adjacent to $\rho_6$ simultaneously. Indeed, on the two cones adjacent to \(\rho_6=\tau_7\), the sets of adapted unordered bases are
	\begin{align*}
		\operatorname{cone}(\rho_5,\rho_6)&\colon\{ a,c,d,e \}, \{a,c,d,f\}, \{b,c,d,e\}, \{b,c,d,f\},\\
		\operatorname{cone}(\rho_6,\rho_7)&\colon\{a,c,e,f\}, \{a,d,e,f\}, \{b,c,e,f\}, \{b,d,e,f\},
	\end{align*}
	which are disjoint.
\end{example}

\section{Applications}\label{sec:examples}

This last section is dedicated to applications of our main theorems. Let us first prove Theorem~\ref{thm:rank4}.

\begin{proof}[Proof of Theorem~\ref{thm:rank4}]
	The nontrivial convex rank tests/fan coarsenings of $\Sigma_r$ are partially ordered by coarsening. By Studen\'y's~\cite{studeny_probabilistic_2005} classification of convex rank tests for $r\leq 4$, the coarsest nontrivial fans coarsening $\Sigma_4$ are all projective. If $X_\Delta$ admits no nonconstant maps to projective space, then it cannot admit a surjective map to a nontrivial coarsening of $\Sigma_4$, this includes all nontrivial coarsenings of lower dimensional permutohedral fans up to lineality. Thus, it cannot admit a nontrivial toric vector bundle of rank $r \leq 4$ with trivial Chern class and affinely independent Chern roots. 
\end{proof}

Next we study some classical examples of threefolds with no nonconstant maps to projective space through the lens of our theorem. 

\begin{example}[Fulton's threefold] \label{ex:fulton}
	Let $N=\mathbb{Z}^3$. Consider the following vectors in $N_\mathbb{R}$:
	\begin{align*}
		f_1&=(1,2,3), & f_2&=(1,-1,1), & f_3&=(-1,-1,1), & f_4&=(-1,1,1),\\
		f_5&=(1,1,-1), & f_6&=(1,-1,-1), & f_7&=(-1,-1,-1), & f_8&=(-1,1,-1),
	\end{align*}
	and let $\Delta$ be the fan whose maximal cones are:
	\begin{align*}
		\delta_1&=\operatorname{cone}(f_1,f_2,f_3,f_4), & \delta_2&=\operatorname{cone}(f_5,f_6,f_7,f_8), & \delta_3&=\operatorname{cone}(f_1,f_2,f_5,f_6),\\
		\delta_4&=\operatorname{cone}(f_2,f_3,f_6,f_7), & \delta_5&=\operatorname{cone}(f_3,f_4,f_7,f_8), & \delta_6&=\operatorname{cone}(f_1,f_4,f_5,f_8). 
	\end{align*}
	This example is due to Fulton~\cite[pp. 25-26]{fulton_introduction_2016}. Payne~\cite[Ex. 4.11]{payne_toric_2008} proves that the toric variety $X_\Delta$ does not admit any nontrivial toric vector bundles of rank $\leq 2$ yet admits a nontrivial toric vector bundle of rank 3 with nontrivial Chern class. Since $X_\Delta$ has no non-constant maps to projective space, it does not admit a nontrivial toric vector bundle of rank $r \leq 4$ with trivial Chern class and affinely independent Chern roots.
\end{example}

\begin{example}[Payne's threefold] \label{ex:payne}
	Let $N=\mathbb{Z}^3$. Consider the following vectors in $N_\mathbb{R}$:
	\begin{align*}
		f_1&=(1,2,3), & f_2&=(1,-1,2), & f_3&=(-1,-1,1), & f_4&=(-1,1,1),\\
		f_5&=(1,1,-1), & f_6&=(1,-1,-1), & f_7&=(-1,-1,-1), & f_8&=(-1,1,-1),
	\end{align*}
	and let $\Delta$ be the fan whose maximal cones are:
	\begin{align*}
		\delta_1&=\operatorname{cone}(f_1,f_2,f_3,f_4), & \delta_2&=\operatorname{cone}(f_5,f_6,f_7,f_8), & \delta_3&=\operatorname{cone}(f_1,f_2,f_5,f_6),\\
		\delta_4&=\operatorname{cone}(f_2,f_3,f_6,f_7), & \delta_5&=\operatorname{cone}(f_3,f_4,f_7,f_8), & \delta_6&=\operatorname{cone}(f_1,f_4,f_5,f_8). 
	\end{align*}
	This example is due to Payne~\cite[Ex. 4.13]{payne_toric_2008}, who proves that the toric variety $X_\Delta$ does not admit any nontrivial toric vector bundles of rank $\leq 3$. Since $X_\Delta$ has no non-constant maps to projective space, it does not admit a nontrivial toric vector bundle of rank $r\leq 4$ with trivial Chern class and affinely independent Chern roots.
\end{example}

We now turn to Theorem \ref{thm:existence}, but first we will need the following technical lemma:

\begin{lemma}\label{lem:graphic}
	Suppose $\Delta$ is a complete fan in $N_\mathbb{R}$ of dimension $n$ with no proper nontrivial coarsenings. For each of the codimension 1 faces of $\Delta$, choose a normal $w\in M_\mathbb{R}$ to it and let $W$ be the collection of such $w$. Let $\mathrm{M}(W)$ be the associated linear matroid. If $\mathrm{M}(W)$ is not a graphic matroid, then $\Delta$ admits no surjective map of fans $T:\Delta\to \Xi$ where $\Xi$ is a nontrivial coarsening of the $n$ dimensional permutohedral fan $\Sigma_{n+1}$. 
\end{lemma}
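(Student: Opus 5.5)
The plan is to pull back $\Xi$ along $T$ and argue by contradiction. Assume there is a surjective morphism of fans $T\colon\Delta\to\Xi$, where $\Xi$ is a nontrivial coarsening of $\Sigma_{n+1}$ in $\mathbb{R}^{n+1}/\mathbb{R}\boldsymbol{1}$. Consider the collection $T^{-1}\Xi=\lbrace T^{-1}(\xi)\mid \xi\in\Xi\rbrace$.

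\textbf{Step 1: the pullback is a fan coarsening $\Delta$.} Preimages of polyhedral cones under a linear map are polyhedral cones, and preimages of faces are faces. Since $|\Xi|$ is the whole space, $T^{-1}\Xi$ is a complete fan, possibly with lineality space $T^{-1}(\operatorname{lin}\Xi)\supseteq\ker T$. Because $T$ is a morphism of fans, each cone $\delta\in\Delta$ lies in some $T^{-1}(\xi)$, so $T^{-1}\Xi$ is a coarsening of $\Delta$. Surjectivity of $T$ together with nontriviality of $\Xi$ (it is not a single linear space) makes $T^{-1}\Xi$ nontrivial: a proper cone $\xi\neq|\Xi|$ pulls back to a proper cone.

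\textbf{Step 2: identify $\Delta$ with $\Xi$.} By the hypothesis that $\Delta$ has no proper nontrivial coarsenings, $T^{-1}\Xi=\Delta$. I treat $\Delta$ as strongly convex, as it is the fan of the toric variety in the intended application. Then its lineality space is zero, so $\ker T\subseteq\operatorname{lin}(T^{-1}\Xi)=0$ and $T$ is injective. Since both spaces have dimension $n$, $T$ is a linear isomorphism, and $\Delta=T^{-1}\Xi$ is the linear image of $\Xi$ under $T^{-1}$.

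\textbf{Step 3: the wall normals form a graphic matroid.} Every codimension one face of $\Xi$ is a union of walls of $\Sigma_{n+1}$, so it lies in a hyperplane $x_i=x_j$, with normal $e_i^*-e_j^*$. Consequently every codimension one face of $\Delta$ lies in $T^{-1}\lbrace x_i=x_j\rbrace$ and has normal $T^*(e_i^*-e_j^*)$, up to a nonzero scalar. Rescaling vectors does not change a linear matroid, so $\mathrm{M}(W)$ is isomorphic to the matroid of a subset of $\lbrace T^*(e_i^*-e_j^*)\rbrace_{i<j}$. Since $T^*$ is an isomorphism, this is the matroid of a subset of the vectors $e_i^*-e_j^*$. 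These vectors realize the cycle matroid of the complete graph $K_{n+1}$, and a restriction of a graphic matroid is graphic. Hence $\mathrm{M}(W)$ is graphic, contradicting the hypothesis.

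\textbf{Main obstacle.} The delicate step is Step 2, namely forcing $T$ to be injective. Without injectivity the normals are images of graphic vectors under a non-invertible map, and such images need not be graphic. Injectivity comes from combining the no-coarsening hypothesis with the absence of lineality in $\Delta$. I would verify carefully that the pullback of a nontrivial $\Xi$ under a surjection really is nontrivial, so that the hypothesis applies.
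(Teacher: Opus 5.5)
Your proposal is correct and is essentially the paper's proof: you pull $\Xi$ back along $T$, use the no-coarsening hypothesis to get $\Delta=T^{-1}\Xi$ (your check that $T^{-1}\Xi$ is nontrivial is a detail the paper leaves implicit), and carry the graphic matroid of the wall normals $e_i^*-e_j^*$ across the isomorphism $T^*$. Two small corrections: first, your ``main obstacle'' is not there, because a surjective linear map between $n$-dimensional spaces is already an isomorphism (this is how the paper begins), so your strong-convexity assumption is unnecessary, and in fact the lemma does not grant it since the paper allows lineality; second, several walls may span the same hyperplane, so $\mathrm{M}(W)$ is in general a parallel extension of a restriction of $\mathrm{M}(K_{n+1})$ rather than a restriction, which is still graphic.
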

\begin{proof}
	Suppose $T:\Delta \to \Xi$ is such a map of fans. The linear map $T$ is an isomorphism. The fan $T^{-1}(\Xi)$ is a complete fan in $N_\mathbb{R}$ coarsening $\Delta$, but $\Delta$ has no proper coarsenings, so $\Delta = T^{-1}(\Xi)$. Let $V$ be the collection of normals to the walls of $\Xi$ and let $\mathrm{M}(V)$ be the associated matroid. Notice $V$ is comprised exclusively of elements of the form $e_i^*-e_j^*$ with $i\neq j$. In particular, $\mathrm{M}(V)$ is a graphic matroid since it is a deletion of a parallel extension of the matroid of the complete graph $K_{n+1}$. The dual map $T^*$ is a linear isomorphism, so $\mathrm{M}(V)$ and $\mathrm{M}(W)$ are isomorphic, but this is not possible since $\mathrm{M}(W)$ is not graphic by assumption.
\end{proof}

Finally, we study a family of complete toric varieties that admit no surjective maps to permutohedral fan coarsenings of the lower dimension. This family of varieties is due to Perling and Schröer~\cite[Sec. 6]{perling_vector_2016}. 

\begin{example}\label{ex:existence}
	For $n\geq 3$, let $N=\mathbb{Z}^n$ with standard basis $e_1,\dots,e_n$ and let $u\geq 1$ be an integer. Consider the following $2n+2$ vectors:
	\begin{align*}
		e &=e_n, & f_i&=e_i \text{ for }1\leq i <n, & f_n &= -\sum_{i=1}^{n-1} e_i,\\
		h &= -e_n, & g_i&=h-f_i \text{ for }1\leq i <n, & g_n &= uh - f_n,
	\end{align*}
	and let $\Delta_n(u)$ be the fan whose $\binom{n+1}{2}$ maximal cones are
	\begin{align*}
		\delta_i&= \operatorname{cone}(e,g_i,f_k)_{k\neq i} \text{ for }1\leq i\leq n,\\
		\delta_{ij}&= \operatorname{cone}(h,g_i,g_j,f_k)_{k\neq i,j}  \text{ for }1\leq i <j \leq n.
	\end{align*}
	One can determine that the pairs of maximal cones sharing a codimension 1 face are precisely $(\delta_i,\delta_j)$ for any distinct $i< j$, $(\delta_i,\delta_{ij})$ for any distinct $i,j$ and $(\delta_{ij},\delta_{ik})$ for any distinct $i, j, k$. Moreover, the collection $W$ of normals to the codimension 1 faces consists of the following vectors:
	\begin{align*}
		e_i^* - e_j^* & \text{ for }\delta_i\cap \delta_j,  i,j\neq n, & e_i^* & \text{ for }\delta_i\cap \delta_n,  i \neq n,\\
		e_i^* -e_j^* - e_n^* & \text{ for }\delta_i \cap \delta_{ij}, i,j\neq n, &  e_i^* - e_n^* & \text{ for }\delta_i \cap \delta_{in}, i\neq n,\\ ue_i^* + e_n^* & \text{ for }\delta_n \cap \delta_{in}, i\neq n, &
		e_j^*-e_k^* & \text{ for }\delta_{ij}\cap\delta_{ik}, j,k\neq n, \\ e_j^* &\text{ for }\delta_{in}\cap \delta_{ij}, i,j\neq n.
	\end{align*}
	
	\begin{proposition}
		The fan $\Delta_n(u)$ has no proper nontrivial coarsening. 
	\end{proposition}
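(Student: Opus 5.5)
A coarsening $\Xi$ of the complete fan $\Delta_n(u)$ is determined by a set of walls that get deleted. Equivalently, it is determined by a partition of the maximal cones into classes whose unions are convex cones; each class is a maximal cone of $\Xi$. The plan is to show that deleting any single wall forces the deletion of every wall, so that $\Xi$ has a single maximal cone equal to $N_\mathbb{R}$. A fan consisting of one linear space is trivial by the paper's convention, so this proves the claim.

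The basic tool is a local convexity criterion. Let $\rho$ be a codimension-2 cone of $\Delta$, and look at the cones around $\rho$ in the 2-dimensional quotient $N_\mathbb{R}/\operatorname{span}\rho$. If a union of maximal cones containing $\rho$ is convex, then it is an angular sector of angle at most $\pi$. Hence if we delete a wall containing $\rho$ and the two adjacent cones together subtend an angle greater than $\pi$, we are forced to delete further walls around $\rho$. If every proper arc of the cyclic arrangement that contains the deleted wall has angle greater than $\pi$, then every wall around $\rho$ must be deleted. This is the same mechanism as the polygon property of Reading used earlier, applied locally.

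The key steps, in order, are as follows.

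First, list the codimension-2 cones of $\Delta_n(u)$ using the adjacency description: pairs $(\delta_i,\delta_j)$, $(\delta_i,\delta_{ij})$, $(\delta_{ij},\delta_{ik})$. The relevant cones are triple intersections such as $\delta_i\cap\delta_j\cap\delta_{ij}$ and $\delta_i\cap\delta_{ij}\cap\delta_{ik}$. For each one, record the cyclic order of the maximal cones around it; generically there are three or four.

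Second, at each such codimension-2 cone, use the wall normals from $W$ to check whether merging any two adjacent cones produces a non-convex region. When three cones meet at a codimension-2 cone, any two of them subtend more than $\pi$ exactly when the remaining wall's normal is not in the cone of the other two normals. This becomes a sign check on linear combinations such as $e_i^*-e_j^*-e_n^*$, $e_i^*-e_n^*$, $ue_i^*+e_n^*$, with attention to how the parameter $u\ge1$ enters.

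Third, assemble these local implications into a directed "forcing graph" on walls and show it is strongly connected. It is natural to work by symmetry. The indices $1,\dots,n-1$ are permuted by $S_{n-1}$, and $n$ is special because of $f_n$ and $u$. So we treat the wall classes $\delta_i\cap\delta_j$, $\delta_i\cap\delta_n$, $\delta_i\cap\delta_{ij}$, $\delta_i\cap\delta_{in}$, $\delta_n\cap\delta_{in}$, $\delta_{ij}\cap\delta_{ik}$ and $\delta_{in}\cap\delta_{ij}$ as the seven types listed for $W$, and show that deleting a wall of any type forces walls of every other type. For example, deleting $\delta_i\cap\delta_j$ should force, around $\delta_i\cap\delta_j\cap\delta_{ij}$, the deletion of $\delta_i\cap\delta_{ij}$ and $\delta_j\cap\delta_{ij}$. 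From there, around the cones $\delta_i\cap\delta_{ij}\cap\delta_{ik}$, the forcing propagates to the $\delta_{ij}\cap\delta_{ik}$ walls, and so on. Since $n\ge3$, there are enough indices for the chains $i,j,k$ to connect everything.

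The main obstacle is the second step: correctly identifying the cyclic arrangement and the angles around each codimension-2 cone, especially those involving the special vectors $f_n$ and $g_n=uh-f_n$, where the dependence on $u$ could make some local merges convex. I would first handle the three-cone stars, since there any merge of two cones is automatically non-convex if the three rays are in general position. Only where a star has four cones would I compute explicit determinants, in low dimension ($n=3$), and then argue that the computation depends only on the index pattern and so generalizes.
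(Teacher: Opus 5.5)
Your proposal is correct and is essentially the paper's argument: both apply Reading's local criterion at each codimension-2 cone and then propagate forced wall deletions through the adjacency of maximal cones. The paper shortens your Steps 1--2 by relabeling $\delta_i$ as $\delta_{0i}$, so that the dual graph is the triangular graph $T(n+1)$ and every codimension-2 cone lies in exactly three maximal cones; hence the four-cone stars and $u$-dependent sign checks you anticipate never occur, and the argument is purely combinatorial.
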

	\begin{proof}
		By relabelling $\delta_i$ as $\delta_{0i}$, one can see that $\delta_{ij}$ and $\delta_{kl}$ share a codimension 1 face precisely when $|\{ ij \}\cap \{ kl \}|=1$. Therefore, the 1-skeleton $G$ of the intersection complex (note that this complex has inverted dimensions) of $\Delta_n(u)$ is isomorphic to the triangle graph $T(n+1)$. The codimension 2 faces of $\Delta_n(u)$ are in correspondence to regions of $G=T(n+1)$, which are all triangular. Suppose that $\Xi$ is a proper coarsening of $\Delta_n(u)$ given by a collection $\mathcal{E}$ of edges of $G$ corresponding to the edges to be merged. By Reading's ridge condition for fan coarsenings~\cite[Thm. 1.1]{reading_coarsening_2010}, if $\mathcal{E}$ defines a fan coarsening, then its intersection with any codimension 2 face needs to define a fan coarsening of the corresponding star fan. The 1-skeleton of any such star fan is triangular, thus a coarsening is either itself or trivial. Since $\Xi$ is a proper coarsening, $\mathcal{E}$ is non-empty, this forces all the triangles in $G$ containing that edge to also belong to $\mathcal{E}$, which in turn forces $\mathcal{E}$ to be all edges in $G$, i.e. for $\Xi$ to be the trivial coarsening. 
	\end{proof}

	\begin{proposition}
		The matroid $\mathrm{M}(W)$ determined by the normals $W$ of codimension 1 faces of $\Delta_n(u)$ is not a graphic matroid.
	\end{proposition}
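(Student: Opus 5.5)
The plan is to exhibit a $U_{2,4}$ minor of $\mathrm{M}(W)$. Graphic matroids are regular, meaning representable over every field. Regular matroids have no $U_{2,4}$ minor, by Tutte's characterization of binary matroids. The class of graphic matroids is also minor-closed. So it suffices to find an element $w_0\in W$ such that the contraction $\mathrm{M}(W)/w_0$, restricted to a suitable rank-$2$ flat, contains four pairwise non-parallel elements.

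Since $n\geq 3$, the indices $1,2,n$ are distinct. From the list of normals in Example~\ref{ex:existence}, $W$ contains the following vectors, each with the wall it comes from:
\begin{align*}
e_1^* &\ (\delta_1\cap\delta_n), & e_2^* &\ (\delta_2\cap \delta_n), & e_2^*-e_n^* &\ (\delta_2\cap\delta_{2n}),\\
ue_2^*+e_n^* &\ (\delta_n\cap \delta_{2n}), & e_1^*-e_2^*-e_n^* &\ (\delta_1\cap\delta_{12}). &&
\end{align*}
All of these lie in the rank-$3$ subspace spanned by $e_1^*,e_2^*,e_n^*$. Contracting $e_1^*$ amounts to working in the quotient $\mathrm{span}(e_1^*,e_2^*,e_n^*)/\mathbb{R}e_1^*$, which has coordinates given by the images of $e_2^*$ and $e_n^*$. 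In this quotient the remaining four vectors become
\[
e_2^*,\quad e_2^*-e_n^*,\quad ue_2^*+e_n^*,\quad -e_2^*-e_n^*.
\]

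Next I check that these four images are pairwise non-parallel in the $2$-dimensional quotient. Writing them as coordinate vectors $(1,0)$, $(1,-1)$, $(u,1)$, $(-1,-1)$, the only possible coincidence of lines is between $(u,1)$ and $(-1,-1)$. That would force $u=-1$, which is excluded since $u\geq 1$. A $2\times 2$ determinant check settles the other pairs. So the restriction of $\mathrm{M}(W)/e_1^*$ to these four elements is $U_{2,4}$. Hence $\mathrm{M}(W)$ has a $U_{2,4}$ minor and is not graphic.

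The only delicate step is bookkeeping: confirming from the list of codimension-$1$ faces that each chosen normal really occurs, with the index constraints ($i,j\neq n$, etc.) satisfied. Taking $i,j\in\{1,2\}$ together with $n$ handles this. If one wants to avoid citing Tutte's theorem, an alternative is to note directly that $U_{2,4}$ is not graphic, because a rank-$2$ graphic matroid without loops or parallel elements has at most $3$ elements. Combining that fact with minor-closedness of graphic matroids gives the same conclusion.
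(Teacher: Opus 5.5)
Your strategy is sound, and it is the same as the paper's: exhibit a $U_{2,4}$ minor and use that graphic matroids form a minor-closed class with no $U_{2,4}$ minor. All five of your normals genuinely lie in $W$. The problem is the parallelism check in the quotient by $e_1^*$. The images $(u,1)$ and $(-1,-1)$ have determinant $u\cdot(-1)-1\cdot(-1)=1-u$, so they are parallel exactly when $u=1$, not when $u=-1$. The pair that is parallel only for $u=-1$ is $(1,-1)$ and $(u,1)$, with determinant $1+u$; you seem to have mixed the two pairs up. Since the example allows $u\geq 1$, your argument fails at $u=1$. This is also the projective case singled out at the end of Example~\ref{ex:existence}, so it is not a negligible boundary case. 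For $u=1$ your four images $(1,0),(1,-1),(1,1),(-1,-1)$ form only three parallel classes. That rank-$2$ matroid is graphic (a triangle with one edge doubled), so no $U_{2,4}$ minor is produced.

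The repair is to drop the $u$-dependent normal. Replace $ue_2^*+e_n^*$ by $e_1^*-e_n^*$, the normal to $\delta_1\cap\delta_{1n}$, whose image after contracting $e_1^*$ is $(0,-1)$. Then $(1,0),(1,-1),(0,-1),(-1,-1)$ are pairwise non-parallel for every $u$, and you get $U_{2,4}$. This is essentially the paper's proof, which uses $e_1^*,\ e_2^*,\ e_1^*-e_n^*,\ -e_1^*+e_2^*-e_n^*,\ e_1^*-e_2^*-e_n^*$, none of which involve $u$. The paper's $-e_1^*+e_2^*-e_n^*$ and your $e_2^*-e_n^*$ have the same image after contracting $e_1^*$.
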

	\begin{proof}
		Consider the five vectors $e_1^*, e_2^*,  e_1^*-e_n^*, -e_1^*+e_2^*-e_n^*, e_1^*-e_2^*-e_n^*$ in $W$. Deleting all other elements of $\mathrm{M}(W)$ and contracting $e_1^*$ gives a matroid isomorphic to $U_{2,4}$ which is a forbidden minor for both regular and graphic matroids, hence $\mathrm{M}(W)$ is not graphic.
	\end{proof}

	Combining the previous two propositions with Lemma~\ref{lem:graphic}, we obtain that $X_{\Delta_n(u)}$ admits no surjective morphism of fans to a nontrivial coarsening of the $n$-dimensional permutohedral fan, this includes all nontrivial coarsenings of lower dimensional permutohedral fans up to lineality. In particular, $\Delta_n(u)$ cannot admit a nontrivial toric vector bundle of rank $r\leq n+1$ with trivial Chern class and affinely independent Chern roots. Interestingly, by \cite[Prop. 6.1]{perling_vector_2016}, $X_{\Delta_n(u)}$ does not admit a nonconstant map to projective space if $u>1$, but $\Delta_n(1)$ is always projective.
\end{example}

\bibliographystyle{abbrvnat}
\bibliography{clean_trivial_chern}

\end{document}